\newtheorem{thm}{Theorem}[section]
\newtheorem*{thm*}{Theorem}
\newtheorem{prop}[thm]{Proposition}
\newtheorem{lem}[thm]{Lemma}
\newtheorem{cor}[thm]{Corollary}
\newtheorem{definition}[thm]{Definition}
\DeclareMathOperator\vol{vol}
\DeclareMathOperator\GL{GL}
\DeclareMathOperator\Hess{Hess}
\DeclareMathOperator\ver{ver}
\DeclareMathOperator\hor{hor}
\DeclareMathOperator\Sym{Sym}
\DeclareMathOperator\Ric{Ric}
\DeclareMathOperator\Imm{Im}
\DeclareMathOperator\Vol{Vol}
\def\div{\mbox{div}}
\def\i{\mbox{i}}
\DeclareMathOperator\tr{tr}
\def\C{\mathbb{C}}
\def\CP{\mathbb{CP}}
\def\Sph{\mathbb{S}}
\def\R{\mathbb{R}}
\def\Z{\mathbb{Z}}
\def\T{\mathbb{T}}
\begin{document}

\title{Pluri-potential theory, submersions\\and calibrations}

\author[*]{Tommaso Pacini}
\affil[*]{Department of Mathematics, University of Torino \newline via Carlo Alberto 10, 10123 Torino, Italy \newline tommaso.pacini@unito.it}


\maketitle

\begin{abstract}
We present a systematic collection of results concerning interactions between convex, subharmonic and pluri-subharmonic functions on pairs of manifolds related by a Riemannian submersion. Our results are modelled on those known in the classical complex-analytic context and represent another step in the recent Harvey-Lawson pluri-potential theory for calibrated manifolds. In particular we study the case of K\"ahler and $G_2$ manifolds, emphasizing both parallels and differences. We show that previous results concerning Lagrangian fibrations can be viewed as an application of this framework.
\end{abstract}

\section{Introduction}\label{s:intro}
The official goal of this paper is to present a systematic collection of results concerning relationships between different potential-theoretic function theories on Riemannian manifolds $M$, $B$ related by a Riemannian submersion $\pi:M\rightarrow B$. More specifically, we will be concerned with various classes of convex, subharmonic and pluri-subharmonic functions on $M$ and with the construction of corresponding convex or subharmonic functions on $B$.

The most classical example of this type of question arises in the case 
$$\pi:\R^2\setminus\{0\}\rightarrow \R^+, \ \ (r,\theta)\mapsto r,$$
viewed as a Riemannian submersion with $\Sph^1$-fibres.
Subharmonic $\Sph^1$-invariant functions above then correspond to convex functions below (wrt the variable $\log r$). More generally, the 3-circle theorem of Hadamard (respectively, a theorem of Hardy) states that the fibre-wise supremum (respectively, the fibre-wise integral) of any subharmonic function above defines a convex function below (wrt the variable $\log r$). These results not only clarify the content of the subharmonic condition, but are also very useful in applications.

Our results in sections \ref{s:invariant}, \ref{s:integralsup} can be seen as a broad generalization of these classical theorems.

\ 

More generally, our interest is in the following points, whose unifying thread is to explore the geometric framework underlying the above function-theoretic question.

\paragraph{Immersions vs. submersions.}Potential theory provides a means, within geometry, for controlling minimal submanifolds. The prime example of this is the standard proof that $\R^n$ contains no compact minimal submanifolds. This result concerns potential theory and immersions. Dualizing, we are interested in (pluri-)potential theory and submersions, and in possible geometric applications of these results. 

Our current main application is Theorem \ref{thm:kahlersubs}, concerning certain Lagrangian fibrations and the volume of the fibres. This result should be viewed as yet another manifestation of the relationship between volume and Ricci curvature, valid in very general Riemannian contexts. The Bishop-Gromov theorem is the most famous example of this relationship.

\paragraph{Pluri-potential theories.}Classical, ie complex-analytic, pluri-potential theory was developed as a tool for studying holomorphic functions. Recent work by Harvey-Lawson, eg \cite{HL3}, has highlighted the existence of analogous pluri-potential theories in many new geometric contexts. In particular, given any calibrated manifold, there exists a notion of pluri-subharmonic (PSH) functions specific to the geometry of that manifold \cite{HL2}. A surprising feature of these theories is the extent of their parallelisms with the classical theory. Our results push this study a step forward, emphasizing that such parallelisms continue to hold in the context of Riemannian submersions. 

The Harvey-Lawson theory is also surprising in the breadth of its applicability. For the sake of definiteness we instead choose to concentrate on few specific examples, chosen from opposite sides of the spectrum: the Riemannian and K\"ahler theories on the more classical side, $G_2$ theory on the exotic side. 

\paragraph{Submanifold geometry.}Our presentation emphasizes, in particular, the interplay between ``opposite" classes of submanifolds: in the language of Harvey-Lawson, these are the calibrated and the ``free" submanifolds. We are thus interested in complex/Lagrangian submanifolds in the K\"ahler case, associative/coassociative submanifolds in the $G_2$ case. 

Our results also rely on second variation formulae specifically tailored to the submanifolds in question. These ingredients would play a similar role in any other calibrated pluri-potential theory.

\paragraph{Geometric constructions of PSH functions.}Although convex, subharmonic and PSH functions exist (locally) in abundance, it is an interesting question to find geometrically meaningful examples. For example, given a point $p$ in a Riemannian manifold $M$, let $r^2$ denote the square-distance function from $p$. It is a standard fact that this function is convex in a neighbourhood of $p$. In the context of Riemannian submersions, the volume functional on the fibres can be used to construct functions which are convex or PSH at certain points: see Corollary \ref{cor:volume}. Yet another such construction, in K\"ahler geometry, is at the heart of Theorem \ref{thm:kahlersubs}.

\paragraph{K\"ahler vs. $G_2$ theory.}Superficially, there are intriguing parallels between K\"ahler and $G_2$ geometry. It is an interesting, actively pursued, question to what extent these parallels hold at a deeper level. 

K\"ahler, more generally complex, geometry has been investigated for centuries and classical pluri-potential theory has found many geometric applications; our Theorem \ref{thm:kahlersubs} relies on the well-developed relationship between curvature and holomorphic line bundles. 

By contrast, $G_2$ geometry is still in its infancy. Its pluri-potential theory still has basically no geometric applications, and there is currently no known analogue of the curvature-vector bundle relationship. Theorem \ref{thm:kahlersubs} is thus interesting also because it marks the point, in this paper, in which the K\"ahler-$G_2$ parallism breaks down. It would be very interesting to find analogues of Theorem \ref{thm:kahlersubs} in the $G_2$ context.

\paragraph{Final comments.}The existence of deep relationships between classical \mbox{(pluri-)} potential theory and Differential Geometry is nowadays clear. Convex functions are related to the properties of geodesics, to curvature, to convex sets and to the geometric structure of Riemannian manifolds, see eg \cite{GreeneShiohama},\cite{Eschenburg},\cite{Sakai}. PSH functions are related to Stein manifolds and to the search for special K\"ahler metrics. In this paper we focus on other aspects, outlined above, of these relationships. Hopefully, our results will stimulate research into relationships between the new Harvey-Lawson pluri-potential theories and the geometry of the corresponding ambient spaces.
 
\ 

The paper is structured as follows. The main results appear in sections \ref{s:invariant} and \ref{s:integralsup}. The main applications of these results appear as corollaries there, and in section \ref{s:application}. Sections \ref{s:hessian}-\ref{s:HL} and section \ref{s:secondvar} provide the necessary background. Section \ref{s:fibrations} is a digression on Lagrangian fibrations in Symplectic geometry and on coassociative fibrations in $G_2$ geometry.

\ 

\textit{Remark. }All our submanifolds will be without boundary. Our geometric applications allow us to mostly restrict to smooth PSH functions. This choice also helps us to stay focused on the main goals of the paper. Standard techniques of pluri-potential theory, as extended by Harvey-Lawson \cite{HL3}, would allow weaker notions of PSH functions, based on upper semi-continuous functions.

\ 

\textit{Acknowledgements. }Potential and pluri-potential theory, and their geometric applications, are a vast subject. I have provided references to every relevant result I know of. I am not aware of any serious overlap between the results of sections \ref{s:invariant}, \ref{s:integralsup} and the literature. Section \ref{s:application} provides a detailed comparison between Theorem \ref{thm:kahlersubs} and previously-known results. I am happy to thank Anna Fino for interesting conversations and the anonymous referee for useful comments.

\section{The Hessian operator}\label{s:hessian}

Given a Riemannian manifold $M$, let $\nabla$ denote its Levi-Civita connection. Recall the following definitions.
\begin{definition}
The Hessian is the operator 
$$\Hess(f)(X,Y):=X(Yf)-\nabla_XY(f).$$ 
The Laplacian is the operator $\Delta f:=\tr\Hess(f).$

A function $f$ is convex, respectively subharmonic, at a point $p\in M$ if $\Hess(f)\geq 0$, respectively $\Delta f\geq 0$, at that point.
\end{definition}
Notice that the correction term $\nabla_XY(f)$ vanishes at critical points, where the Hessian is thus independent of the metric.

We are interested in how the Hessian interacts with immersions and submersions. Let us view these in turn.

\paragraph{Immersions.}Let $(M,g)$ be a Riemannian manifold and $\Sigma\hookrightarrow M$ be a submanifold endowed with the restricted metric. Each has its own Hessian. In order to compare them, choose $X,Y\in T_p\Sigma$ (locally extended). Then, using standard notation,
\begin{align*}
\Hess_M(f)(X,Y)&=X(Y(f))-(\nabla_XY)^T(f)-(\nabla_XY)^\perp(f)\\
&=\Hess_{\Sigma}(f)(X,Y)-(\nabla_XY)^\perp(f).
\end{align*}
The two Hessians are thus related by the second fundamental form of $\Sigma$. This leads to the following conclusion.
\begin{prop}\label{prop:sigma_restrict}
Let $\Sigma\hookrightarrow (M,g)$ be a submanifold endowed with the restricted metric.
\begin{enumerate}
\item Assume that either $df_{|T\Sigma^\perp}=0$ or $\Sigma$ is totally geodesic. Then $\Hess_{\Sigma}(f)=\Hess_M(f)_{|T\Sigma}$.
\item Assume $\Sigma$ is minimal. Then $\Delta_\Sigma f=\tr_{|T\Sigma} \Hess_M(f)$. In particular, if $\Hess_M(f)\geq 0$ then $f_{|\Sigma}$ is subharmonic.
\end{enumerate}
\end{prop}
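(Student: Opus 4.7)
The formula derived immediately above the proposition, namely
$$\Hess_M(f)(X,Y) = \Hess_\Sigma(f)(X,Y) - (\nabla_X Y)^\perp(f)\qquad \text{for } X,Y\in T_p\Sigma,$$
already does all the geometric work; the proposition is essentially a matter of reading off when the correction term $(\nabla_X Y)^\perp(f)$ vanishes, either pointwise or after tracing.

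For part (1), I would treat the two hypotheses separately. If $\Sigma$ is contained in the critical locus of $f$, then $df_{|\Sigma}\equiv 0$, so in particular $(\nabla_X Y)^\perp(f) = df((\nabla_X Y)^\perp) = 0$ at every $p\in\Sigma$, and the displayed identity reduces to $\Hess_\Sigma(f)(X,Y)=\Hess_M(f)(X,Y)$. If instead $\Sigma$ is totally geodesic, then by definition $(\nabla_X Y)^\perp = 0$ for all $X,Y\in T\Sigma$, and the same conclusion follows directly.

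For part (2), I would pick an orthonormal frame $\{e_1,\dots,e_k\}$ of $T_p\Sigma$ and compute
$$\Delta_\Sigma f = \sum_{i=1}^k \Hess_\Sigma(f)(e_i,e_i) = \sum_{i=1}^k \Hess_M(f)(e_i,e_i) + \sum_{i=1}^k (\nabla_{e_i} e_i)^\perp(f) = \tr_{|T\Sigma}\Hess_M(f) + k\,H(f),$$
where $H := \tfrac{1}{k}\sum_i(\nabla_{e_i}e_i)^\perp$ is the mean curvature vector of $\Sigma$. Minimality of $\Sigma$ is exactly $H=0$, so the correction term disappears and we obtain $\Delta_\Sigma f = \tr_{|T\Sigma}\Hess_M(f)$. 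If moreover $\Hess_M(f)\geq 0$ as a bilinear form on $TM$, then its restriction to $T\Sigma$ is still nonnegative, hence its trace is nonnegative, and $f_{|\Sigma}$ is subharmonic.

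There is essentially no obstacle: once the identity relating the two Hessians via the second fundamental form is in hand, each claim is a one-line verification. The only point deserving care is the traced version in (2), where one must recognize $\sum_i(\nabla_{e_i}e_i)^\perp$ as (a multiple of) the mean curvature vector, so that the word \emph{minimal} is precisely what is needed to kill that sum.
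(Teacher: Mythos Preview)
Your proposal is correct and follows exactly the route the paper intends: the paper states the identity $\Hess_M(f)(X,Y)=\Hess_\Sigma(f)(X,Y)-(\nabla_XY)^\perp(f)$ and then simply says ``This leads to the following conclusion,'' leaving the verification you wrote out as an exercise. The only cosmetic difference is that the paper uses the unnormalized convention $H=\tr_\Sigma(\nabla^\perp)=\sum_i(\nabla_{e_i}e_i)^\perp$, whereas you divide by $k$; since minimality is the vanishing of $H$ either way, this has no effect on the argument.
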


\textit{Remark. }Simple examples, such as $f(x,y,z):=x^2+y^2-z^2$ and $\Sigma:=z-\mbox{axis}\subset\R^3$, show that in general one cannot hope that subharmonic functions restrict to subharmonic.

\ 

This relationship implies that potential theory provides strong control over minimal submanifolds. For example:

1. $\R^n$ does not contain compact minimal submanifolds. Indeed, it suffices to apply the proposition to each coordinate function $f:=x_i$: the ambient Hessian vanishes so each restricted function would be harmonic on the submanifold, thus constant.

2. Assume $\Sigma\subseteq M$ is compact and minimal, and that $\Hess_M(f)$ is non-negative. Then $f_{|\Sigma}$ is constant, ie $\Sigma$ is contained in a level set of $f$. More-over, $\Sigma$ cannot intersect any region where $\Hess_M(f)$ is positive definite: otherwise $\Delta_\Sigma f>0$, contradicting the fact that $f_{|\Sigma}$ is constant \cite{TsaiWang}.

The existence of an appropriate ``background $f$" thus puts strong restrictions on the properties and location of minimal submanifolds.

\ 

\textit{Remark. }Part 1 above depends of course strongly on the chosen ambient metric: stereographic projection from $\Sph^2$ produces a metric on $\R^2$ which does admit a minimal $\Sph^1$.

\paragraph{Submersions: generalities.}In Differential Topology the natural dual of the notion of immersion is that of submersion. In our context we shall be interested in the corresponding Riemannian notion. It may be useful to review some basic facts, as follows.

Let $M$, $B$ be Riemannian manifolds and $\pi:M\rightarrow B$. This data is called a Riemannian submersion if $\pi$ is a surjective map and each restriction $d\pi:\ker(d\pi)^\perp\rightarrow TB$ is an isometry. The tangent bundle $TM$ splits into the direct sum of two natural distributions: $\ker(d\pi)$, known as the vertical distribution, and $\ker(d\pi)^\perp$, known as the horizontal distribution. The vertical distribution is automatically integrable. We will let $X=X^{\ver}+X^{\hor}$ denote the corresponding decomposition of a vector. 

A horizontal vector field $X$ on $M$ is called basic if its projection onto $B$ is a well-defined vector field on $B$, ie if, along each fibre $\pi^{-1}(b)$, all vectors $d\pi(X)$ coincide. We will often rely on the corresponding 1:1 identification
$$\mbox{vector fields on $B$}\leftrightarrow\mbox{horizontal basic vector fields on $M$},$$
and more generally on all identifications between horizontal/$\pi$-invariant objects above and the corresponding objects below.

It is known \cite{Hermann}, \cite{Nagano} that $M$ complete implies $B$ complete, and that in this case all fibres are diffeomorphic and $M$ is a locally trivial fibre bundle. Furthermore, if the fibres are totally geodesic then all fibres are isometric and the structure group of the fibre bundle is given by isometries. This does not yet imply that $M$ is locally trivial from the Riemannian viewpoint, ie that $M$ is locally isometric to the Riemannian product $B\times F$, where $F$ is the generic fibre: in this case the horizontal distribution can be identified with $TB$, so it must be integrable. 

These conditions are encoded by two tensors $T$, $A$ on $M$ introduced in \cite{Oneill}. The tensor $T$ is defined as an extension of the second fundamental form 
\begin{equation*}
\ker(d\pi)\times \ker(d\pi)\rightarrow \ker(d\pi)^\perp, \ \ (X,Y):=(\nabla_XY)^\perp,
\end{equation*}
of the fibres; it vanishes iff the fibres are totally geodesic. The tensor $A$ can also be defined using $\nabla$; equivalently, it is an extension of the tensor 
\begin{equation*}
\ker(d\pi)^\perp\times \ker(d\pi)^\perp\rightarrow \ker(d\pi), \ \ (X,Y):=[X,Y]^{\ver}.
\end{equation*}
It vanishes iff the horizontal distribution is integrable.

It follows from \cite{Oneill} that both tensors $T$, $A$ vanish iff $M$ is a locally trivial fibre bundle from the Riemannian viewpoint, as defined above.

\ 

We will use the following general fact. 

\begin{lem}
Let $\pi:M\rightarrow B$ be a Riemannian submersion. 
\begin{enumerate}
\item Let $\alpha$ be a curve in $B$. Then a horizontal lift of $\alpha$ is a geodesic in $M$ iff $\alpha$ is a geodesic in $B$.
\item Any geodesic in $M$ which is initially horizontal is always horizontal, and is thus the lift of a geodesic in $B$.
\end{enumerate}
\end{lem}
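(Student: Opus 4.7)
My plan is to reduce the lemma to a single computational fact about the Levi-Civita connection of $M$ acting on basic horizontal vector fields, and then to handle part 2 via uniqueness of geodesics.

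The key identity I would prove first is the following: if $X$ is a basic horizontal vector field on $M$, $\pi$-related to $\hat X$ on $B$, then $\nabla^M_X X$ is horizontal and basic, and corresponds under the identification horizontal-basic/on-$B$ to $\nabla^B_{\hat X}\hat X$. The horizontal part follows from the Koszul formula applied to triples of basic horizontal fields $X,Y,Z$, using that the Riemannian submersion property identifies inner products of basic horizontal fields with inner products of their projections, that $X g(Y,Z) = (\hat X g_B(\hat Y, \hat Z))\circ\pi$, and that $\pi$-relatedness is preserved under Lie brackets. For the vertical part, Koszul with a vertical test field $V$ reduces to
$$2g(\nabla^M_X X, V) = -V\,g(X,X) + 2\,g([V,X], X),$$
which vanishes because $g(X,X)$ is constant on fibres (hence killed by $V$), and $[V,X]$ is $\pi$-related to $[0,\hat X]=0$, so it is vertical and orthogonal to $X$.

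Part 1 then follows by a pointwise argument. At any parameter value $t_0$ with $\dot\alpha(t_0)\neq 0$, extend $\dot\alpha$ to a local vector field $\hat X$ on $B$ and let $X$ be its horizontal lift; then $X\circ\tilde\alpha=\dot{\tilde\alpha}$ in a neighborhood of $t_0$, so $\nabla^M_{\dot{\tilde\alpha}}\dot{\tilde\alpha}|_{t_0} = (\nabla^M_X X)_{\tilde\alpha(t_0)}$, which by the key identity is the horizontal lift of $(\nabla^B_{\hat X}\hat X)_{\alpha(t_0)} = \nabla^B_{\dot\alpha}\dot\alpha|_{t_0}$. The two accelerations vanish simultaneously, and the remaining parameter values (where $\dot\alpha$ vanishes) are handled trivially or by continuity, yielding the equivalence.

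For part 2, let $\gamma$ be a geodesic in $M$ with $\dot\gamma(0)$ horizontal. Set $\bar v:=d\pi(\dot\gamma(0))$, let $\alpha$ be the geodesic in $B$ with initial velocity $\bar v$, and let $\tilde\alpha$ be its horizontal lift starting at $\gamma(0)$. By part 1, $\tilde\alpha$ is a geodesic in $M$, and by construction $\dot{\tilde\alpha}(0)=\dot\gamma(0)$; uniqueness of the geodesic with prescribed initial condition forces $\gamma=\tilde\alpha$ on their common domain, and the conclusion propagates globally by the usual continuation argument. The technical heart of the argument is the key identity, and in particular the verification that $\nabla^M_X X$ has no vertical component; everything else is formal, relying on the defining property of the Levi-Civita connection and the standard ODE existence and uniqueness theorem for geodesics.
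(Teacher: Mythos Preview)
Your proof is correct and follows essentially the same route as the paper. The paper simply quotes the O'Neill-type formula $\nabla^M_X Y = \nabla^B_X Y + \tfrac{1}{2}[X,Y]^{\ver}$ for basic horizontal fields (citing Petersen) and sets $Y=X$ so that the vertical term drops out; you instead derive the special case $\nabla^M_X X$ directly via the Koszul formula, which is more self-contained but otherwise equivalent. Part 2 is handled identically in both: project the initial data to $B$, take the geodesic there, lift it horizontally, and invoke uniqueness of geodesics with given initial conditions.
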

\begin{proof}
Part 1: Recall, cf eg \cite{Petersen}, that if $X$, $Y$ are vector fields on $B$ then, using the same notation for their basic horizontal lifts, the covariant derivatives are related by the general formula
$$\nabla^M_XY=\nabla^B_XY+(1/2)[X,Y]^{\ver}.$$
Thus $\nabla^M_{\dot\alpha}\dot\alpha=\nabla^B_{\dot\alpha}\dot\alpha+(1/2)[\dot\alpha,\dot\alpha]^{\ver}=\nabla^B_{\dot\alpha}\dot\alpha$.

Part 2: Let us project the initial point and direction of the geodesic down to $B$. We then generate a geodesic in $B$, which by Part 1 lifts to a horizontal geodesic in $M$. It has the same initial data as the original geodesic, so the two coincide.
\end{proof}

\ 

\textit{Examples. }Examples of Riemannian submersions include products, for which both tensors $T$, $A$ vanish, and warped products, in which all fibres are diffeomorphic but not necessarily isometric (so $T\neq 0$).

Below we will be interested in the class of examples provided by manifolds with an isometric group action. A special case is that of principal fibre bundles over a Riemannian manifold: a connection on the bundle, together with an invariant metric on the Lie algebra, then generates a Riemannian submersion. In this case the vanishing of $A$, ie the integrability of the horizontal distribution defined by the connection, is equivalent to the vanishing of the connection's curvature.

\ 

\textit{Example. }The following construction provides an example of a Riemannian submersion with a totally geodesic fibre such that $A=0$ only along the fibre. This is relevant to Proposition \ref{p:potential}, below.

Consider the manifold $\Sph^1\times\R^2$ endowed with the standard product Riemannian structure $g$. Using variables $(\theta,x,y)$ we can identify $g$ with the matrix $Id$. The standard projection onto $\R^2$ is a Riemannian submersion. It has totally geodesic fibres and the horizontal distribution is integrable.

Choose symmetric matrices of the form
$$h(x,y)=\left(\begin{array}{ccc}
0&a&b\\
a&0&0\\
b&0&0
\end{array}\right),$$
where $a=a(x,y)$, $b=b(x,y)$. Fix $\epsilon$ small. Then $g':=Id+\epsilon h$ defines (locally) a new metric on $\Sph^1\times\R^2$. The fact that $a,b$ do not depend on $\theta$ implies that $g'$ is $\Sph^1$-invariant, so the standard projection again defines a Riemannian submersion. 

Consider the map
$$c:\Sph^1\times\R^2\rightarrow\Sph^1\times\R^2,\ \ c(\theta,x,y):=(\theta,-x,-y).$$
Notice that $c^2=Id$. If we assume $a(-x,-y)=-a(x,y)$, $b(-x,-y)=-b(x,y)$ then $g'$ is $c$-invariant, ie $c$ is a $g'$-isometry. One can use this (or a direct calculation) to show that its set of fixed points, ie the fibre $\Sph^1\times\{(0,0)\}$, is totally geodesic. 

Vector fields which are horizontal, ie $g'(v,\partial\theta)=0$, take the form 
$$v(\theta,x,y)=(-\epsilon a v_1-\epsilon b v_2,v_1,v_2).$$
Choosing two such vector fields $v,w$, we can compute that
$$[v,w]^{ver}=\left(v_1(-\epsilon b_x+\epsilon a_y)w_2+v_2(-\epsilon a_y+\epsilon b_x)w_1\right)\partial\theta.$$
Notice that this expression is tensorial in $v,w$, as expected.

The horizontal distribution is integrable at a given point iff $a_y=b_x$ there. If, for example, we choose $a=x^2y$ and $b=-xy^2$ then integrability holds only along the fibre $\Sph^1\times\{(0,0)\}$, as desired.

A final remark: at any point where $(a,b)=(0,0)$, the horizontal spaces defined by $g$, $g'$ coincide. At any other point, the vector $v:=(0,-b,a)$ is horizontal wrt both metrics; this corresponds to the fact that, for dimensional reasons, the two planes must intersect. One finds
$$|v|_g=|v|_{g'}=\sqrt{a^2+b^2}.$$ 
The vector $w:=(-\epsilon a^2-\epsilon b^2, a,b)$ is also $g'$-horizontal. One finds that $g'(v,w)=0$ and that 
$$|w|_{g'}=\sqrt{a^2+b^2}\cdot\sqrt{1-\epsilon^2(a^2+b^2)}.$$
The normalized projections $(|v|_{g'})^{-1}(-b,a)$, $(|w|_{g'})^{-1}(a,b)$ are an orthonormal frame on the base space $\R^2$, wrt the metric induced by $g'$.

\paragraph{Submersions: functions.}Given a Riemannian submersion $(M,B,\pi)$, we shall be interested in three classes of functions:
\begin{itemize}
\item Invariant functions $f:M\rightarrow\R$, ie those of the form $f=\pi^*F$, for some $F:B\rightarrow\R$. 
\item Fibre-wise integral functions $F:B\rightarrow\R$, ie those of the form 
$$F(b)=\int_{\pi^{-1}(b)}f\vol,$$ 
for some $f:M\rightarrow\R$. We will assume the fibres are compact. Standard results ensure the smoothness of $F$ wrt the variable $b$. 
\item Fibre-wise supremum functions $F:B\rightarrow\R$, ie those of the form 
$$F(b)=\sup\{f(p):p\in \pi^{-1}(b)\},$$ 
for some $f:M\rightarrow\R$. We will assume the fibres are compact. This also serves to ensure local (wrt the variable $b$) uniform continuity of $f$, thus equicontinuity of the corresponding family (wrt the parameter defined by points $y$ on the fibre) of functions $b\mapsto f(b,y)$. In turn, this ensures the continuity of $F$. This will suffice for our purposes, though further assumptions would guarantee smoothness. 
\end{itemize}

The following result is the natural dual of Proposition \ref{prop:sigma_restrict}. It presents the most basic relationship between the Hessians of invariant functions. We will later generalize this to other classes of functions.

\begin{prop}\label{prop:hessian}
Let $\pi:M\rightarrow B$ be a Riemannian submersion and $f=\pi^*F$ be an invariant function. Then 
$$\Hess_M(f)(X,X)=\Hess_B(F)(X,X),$$ 
for all horizontal vectors $X$.
\end{prop}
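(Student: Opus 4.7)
The plan is to extend the horizontal vector $X$ at the point in question to a basic horizontal vector field on $M$, that is, the horizontal lift of a vector field $\tilde X$ on $B$. Since the Hessian is a tensor, it suffices to prove the identity for such extensions. Writing $f = \pi^* F$ and using that $d\pi$ is an isometry from $H$ to $TB$, the key observation is that $X(f)$ is again an invariant function: explicitly, $X(f) = \pi^*(\tilde X F)$, because $df(X) = dF(d\pi(X)) = dF(\tilde X)$ at every point.

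Iterating, $X(X(f)) = \pi^*(\tilde X(\tilde X F))$. For the correction term $\nabla_X^M X(f)$, I would invoke the formula recalled in the proof of the preceding lemma,
$$\nabla^M_X X = \nabla^B_{\tilde X} \tilde X + \tfrac{1}{2}[X,X]^{\ver} = \nabla^B_{\tilde X}\tilde X,$$
(interpreting the right-hand side as the basic horizontal lift). Since $f = \pi^* F$ is constant on fibres, any vertical vector annihilates $f$; hence only the horizontal part of $\nabla^M_X X$ matters, and that part is the basic lift of $\nabla^B_{\tilde X}\tilde X$. Applying $f$ then gives $\nabla^M_X X(f) = \pi^*(\nabla^B_{\tilde X}\tilde X (F))$.

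Combining the two pieces,
$$\Hess_M(f)(X,X) = X(X(f)) - \nabla^M_X X(f) = \pi^*\bigl(\tilde X \tilde X F - \nabla^B_{\tilde X}\tilde X F\bigr) = \pi^* \Hess_B(F)(\tilde X,\tilde X),$$
which is exactly $\Hess_B(F)(X,X)$ under the isometric identification $d\pi : H \to TB$.

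There is essentially no obstacle here: once one commits to extending via basic horizontal lifts, the result is a direct application of the O'Neill-type formula for $\nabla^M$ versus $\nabla^B$, together with the triviality that invariant functions are annihilated by vertical vectors. The one subtlety worth flagging in the write-up is that the identity genuinely requires $X$ to be horizontal; for mixed horizontal/vertical inputs, the second fundamental forms of the fibres enter and must be dealt with separately, presumably in the later generalizations the author advertises.
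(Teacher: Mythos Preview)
Your argument is correct and matches the paper's second (alternative) proof essentially line for line: extend to basic horizontal fields, use the O'Neill formula $\nabla^M_XY=\nabla^B_XY+\tfrac12[X,Y]^{\ver}$, and kill the vertical part via invariance of $f$. The paper also records a slightly quicker first proof via geodesics: since a horizontal geodesic in $M$ projects to a geodesic in $B$, one has $\Hess_M(f)(X,X)=\frac{d^2}{dt^2}(f\circ\alpha)_{|t=0}=\frac{d^2}{dt^2}(F\circ\pi\circ\alpha)_{|t=0}=\Hess_B(F)(X,X)$ directly.
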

\begin{proof}
One way to prove this is by choosing a geodesic $\alpha$ with initial velocity $X$. Then
$$\Hess_M(f)(X,X)=\frac{d^2}{dt^2}(f\circ\alpha)_{|t=0}=\frac{d^2}{dt^2}(F\circ\alpha)_{|t=0}=\Hess_B(F)(X,X).$$
Alternatively, wrt any two horizontal basic vector fields $X$, $Y$, 
\begin{align*}
\Hess_M(f)(X,Y)&=\nabla_X\nabla_Yf-\nabla_XY(f)\\
&=\nabla_X\nabla_Yf-(\nabla_XY)^{\hor}(f)-(1/2)[X,Y]^{\ver}(f)\\
&=\nabla_X\nabla_YF-\nabla_XY(F)=\Hess_B(F)(X,Y),
\end{align*}
where we use the fact that $f$ is constant on fibres.
\end{proof}

\section{Function theory and submersions: Model cases}\label{s:models}

Given a Riemannian submersion $\pi:M\rightarrow B$, we are interested in the natural next step beyond Proposition \ref{prop:hessian}: relating function theory on $M$ to function theory on $B$. The exact relationship will depend on the function theories involved. Let us review two classical situations, which will serve as models.

\paragraph{Potential theory.}The submersion structure provides a very strong relationship between horizontal directions in $M$ and $B$. In order to get a relationship between function theories, however, it is necessary to gain control over the vertical directions. We can do this via additional assumptions either on the function or on the extrinsic geometry of the fibre, as follows.

\begin{prop}\label{p:potential}
Let $\pi:M\rightarrow B$ be a Riemannian submersion. Assume $f=\pi^*F$. Choose a fibre $\Sigma_b$. Then, at each point of $\Sigma_b$,
\begin{enumerate}
\item If $f$ is convex then $F$ is convex.
\item If either
\begin{itemize}
\item[(i)] $\Sigma_b$ is contained in the critical locus of $f$, or 
\item[(ii)] $T=0$ and $A=0$ at each point of $\Sigma_b$ 
\end{itemize}
then $\Hess_M(f):=\pi^*(\Hess_B(F))$.

In particular, $F$ is convex at $b\in B$ iff $f$ is convex at each point of $\Sigma_b$.
\item If $\Sigma_b$ is minimal then $\Delta_Mf=\pi^*(\Delta_BF)$. 

In particular, $F$ is subharmonic at $b\in B$ iff $f$ is subharmonic at each point of $\Sigma_b$.
\end{enumerate}
\end{prop}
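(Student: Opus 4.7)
The plan is to treat the three parts separately, using the horizontal-vertical decomposition $T_pM=H_p\oplus V_p$ provided by the submersion throughout.

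Part 1 follows immediately from Proposition \ref{prop:hessian}: given any $Y\in T_bB$, pick any $p\in\pi^{-1}(b)$ and let $X\in H_p$ be the horizontal lift of $Y$; convexity of $f$ gives
$$\Hess_B(F)(Y,Y)=\Hess_M(f)(X,X)\ge 0,$$
so $F$ is convex.

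For Part 2 the plan is to verify the identity $\Hess_M(f)=\pi^*\Hess_B(F)$ at points of $\Sigma_b$ block-by-block. The horizontal-horizontal block is Proposition \ref{prop:hessian} (after polarization). The vertical-vertical block reduces to the fact that $f_{|\Sigma_b}$ is constant: by Proposition \ref{prop:sigma_restrict}(1), in either hypothesis one has $\Hess_M(f)_{|T\Sigma_b}=\Hess_{\Sigma_b}(f)=0$, matching $\pi^*\Hess_B(F)(V,V')=0$. For the mixed block, use $\Hess_M(f)(V,H)=V(H(f))-df(\nabla_VH)$; the first term vanishes because $H(f)$ is basic (hence annihilated by vertical vectors), while the second term vanishes in the critical-locus case because $df=0$, and in the totally geodesic case requires using the vanishing of the O'Neill $T$-tensor to analyze $\nabla_VH$. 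The ``in particular'' equivalence for convexity then follows at once from the block-by-block identity.

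For Part 3 the plan is a trace computation. Choose an orthonormal frame of $T_pM$ split into vertical $\{V_1,\dots,V_k\}$ and horizontal $\{H_1,\dots,H_n\}$ subframes, and write
$$\Delta_Mf=\sum_j\Hess_M(f)(H_j,H_j)+\sum_i\Hess_M(f)(V_i,V_i).$$
Proposition \ref{prop:hessian} gives $\sum_j\Hess_M(f)(H_j,H_j)=\sum_j\Hess_B(F)(d\pi H_j,d\pi H_j)=\Delta_BF(\pi(p))$, since $\{d\pi H_j\}$ is orthonormal in $T_{\pi(p)}B$. For the vertical part, $V_i(V_i(f))=0$ because $f$ is basic, so $\Hess_M(f)(V_i,V_i)=-df(\nabla_{V_i}V_i)$; since $df$ annihilates vertical vectors, the sum equals $-df\bigl(\sum_i(\nabla_{V_i}V_i)^{\hor}\bigr)$, i.e. $-df$ applied to the mean curvature vector of $\Sigma_b$ at $p$. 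Minimality of $\Sigma_b$ kills this contribution, yielding $\Delta_Mf=\pi^*\Delta_BF$ on $\Sigma_b$ and hence the subharmonicity equivalence.

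I expect the main obstacle to be the mixed vertical-horizontal block in Part 2 under the totally geodesic hypothesis. The critical-locus case is handled cleanly by $df=0$, but in the totally geodesic case the vanishing of the $T$-tensor only reduces $\nabla_VH$ to its horizontal part (an O'Neill $A$-tensor contribution $A_HV$), and verifying that its interaction with $df$ is compatible with $\pi^*\Hess_B(F)(V,H)=0$ is the most delicate step; I would check this by combining the basic structure of $f$ with the O'Neill tensor formalism.
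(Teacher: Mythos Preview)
Your Parts 1 and 3 are correct and are exactly the paper's argument; your Part 3 simply spells out what the paper compresses into ``$\sum_i\Hess_M(f)(e_i,e_i)=\Delta_{\Sigma_b}f=0$'' by exhibiting the mean-curvature term $-df\bigl(\sum_i(\nabla_{e_i}e_i)^{\hor}\bigr)$ explicitly.

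For Part 2, the paper's own proof does precisely what you do on the diagonal blocks: Proposition \ref{prop:hessian} for the horizontal block, and ``$\Hess_M(f)$ vanishes on vertical vectors $\Leftrightarrow\Hess_{\Sigma_b}(f)=0$'' (via Proposition \ref{prop:sigma_restrict}(1)) for the vertical block. The paper says nothing about the mixed block. Your instinct that this is the real obstacle is correct, and the check you propose will \emph{not} go through: for basic horizontal $X$ and vertical $V$ one has
\[
\Hess_M(f)(X,V)=X(Vf)-df(\nabla_X V)=-df\bigl((\nabla_X V)^{\hor}\bigr)=-df(A_X V),
\]
which is controlled by the O'Neill $A$-tensor (non-integrability of the horizontal distribution), not by the $T$-tensor (second fundamental form of the fibre). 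Totally geodesic fibres kill $T$ but say nothing about $A$. Concretely, in the Hopf fibration $S^3\to S^2$ the fibres are totally geodesic great circles while the horizontal distribution is the contact structure, hence maximally non-integrable; for a generic $F$ on $S^2$ the mixed term above is nonzero. Consequently the full tensor identity $\Hess_M(f)=\pi^*\Hess_B(F)$, and with it the ``$F$ convex $\Rightarrow f$ convex'' half of the ``iff'', fail under the totally-geodesic hypothesis alone; they do hold under the critical-locus hypothesis, since then $df=0$ kills $-df(A_XV)$ directly. So the gap you flagged is genuine and lies in the statement (and the paper's proof) rather than in your approach.
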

\begin{proof}
Part 1 is a direct consequence of Proposition \ref{prop:hessian}.
 
Part 2: Assume $X$ horizontal, $Y$ vertical. We must examine three cases.

In horizontal directions, by Proposition \ref{prop:hessian},
$$\Hess_M(f)(X,X)=\Hess_B(F)(X,X),$$
as desired. In vertical directions, by Proposition \ref{prop:sigma_restrict} and assumption (i) or the assumption $T=0$, 
$$\Hess_M(f)(Y,Y)=\Hess_\Sigma(f)(Y,Y)=0$$ 
because $f$ is invariant, as desired. To conclude, write
$$\Hess_M(f)(X,Y)=X(Y(f))-(\nabla_XY)^{\ver}(f)-(\nabla_XY)^{\hor}(f).$$
The first two terms vanish because $f$ is invariant. Under assumption (i) the third term also vanishes, as desired. Alternatively notice that, for every $X'$ horizontal, 
$$(\nabla_XY)^{\hor}\cdot X'=\nabla_XY\cdot X'=-Y\cdot \nabla_XX'=-Y\cdot (1/2) [X,X']^{\ver}.$$
It follows that the assumption $A=0$ implies the vanishing of $(\nabla_XY)^{\hor}$, thus of the third term, as desired.

Part 3: Assume $\Sigma_b$ is minimal. Choosing a local vertical ON basis we can write
\begin{equation*}
\Hess_M(f)(e_i,e_i)=\Delta_\Sigma f=0,
\end{equation*}
where we use the invariance of $f$. The conclusion follows by taking the trace of $\Hess$ wrt horizontal vectors.
\end{proof}

\begin{cor}\label{cor:potential}
Let $\pi:M\rightarrow B$ be a Riemannian submersion and $F:B\rightarrow \R$. 
\begin{enumerate}
\item If $T=0$ and $A=0$ on $M$ then $\pi^*F$ is convex iff $F$ is convex.
\item If all fibres are minimal then $\pi^*F$ is subharmonic iff $F$ is subharmonic.
\end{enumerate}
\end{cor}
The second statement goes back to \cite{EellsSampson}, \cite{Fuglede}.

\ 

\textit{Example. }Consider $M:=\R^2\setminus 0$, seen as a submersion over $B:=\R^+$ with fibres given by the circles of radius $r$. The radius function $r$ on $M$ is invariant, and it is convex: one can check this either via a direct calculation, or by looking at its graph: the upper half of the cone $z^2=x^2+y^2$. It descends to the linear function $r$ on $B$, but it is strictly convex in directions tangent to the fibres. This example confirms that, without additional assumptions on the fibres, we cannot expect that $\Hess_M(f)=\pi^*(\Hess_B(F))$.

\ 

\textit{Remark. }The above example suggests yet another interesting situation. Let $M$ be a Riemannian manifold and $\Sigma$ a compact submanifold. Let $r$ denote the distance function from $\Sigma$. Then $|\nabla r|\equiv 1$ so $r:M\setminus\Sigma\rightarrow B:=(0,\infty)$ is (where defined) a Riemannian submersion with codimension 1 fibres. For dimensional reasons, $A=0$. Choose $F:(0,\infty)\rightarrow\R$. Set $f:=\pi^*F$. Then for $X$ horizontal and $Y$ vertical, $\Hess_M(f)(X,X)=\Hess_B(F)(X,X)$ and $\Hess_M(f)(X,Y)=0$ as in Proposition \ref{p:potential}, Part 2. In vertical directions,
$$\Hess_M(f)(Y,Y)=\Hess_\Sigma(f)(Y,Y)-\nabla_YY^{\hor}(f)=-(\nabla_YY\cdot N)df(N),$$
by invariance of $f$ and setting $N:=\nabla r$. Its sign can thus be controlled via sign assumptions on the second fundamental form of the fibres and on the derivative $F'$. In this situation one typically finds that $\Hess_M(f)\neq\pi^*(\Hess_B(F))$.

Applied to the above example, this confirms the convexity of $f:=\pi^*(r^2)$.

\paragraph{Classical pluri-potential theory.}The situation described above is very simple. Using the invariance of $f$ we have built a perfect dictionary between analogous function theories on $M$ and on $B$. On the flip side, this means that there is basically no gain in passing between these two spaces.

The situation is more interesting in the case of complex manifolds: here, one can produce a dictionary between two different function theories. First, some definitions.

\begin{definition}Let $M$ be a complex manifold.

A function $f:M\rightarrow\R$ is pluri-subharmonic (PSH) iff its Levi form is non-negative, ie $\partial\bar\partial f(Z,\bar Z)\geq 0$ for all complexified vectors of the form $Z=X-iJX$.
\end{definition}
A direct calculation shows that 
\begin{equation}\label{eq:Levi_bis}
\partial\bar\partial f(Z,\bar Z)=2i\partial\bar\partial f (X,JX).
\end{equation}

Furthermore, using Cartan's formula one finds
$$\partial\bar\partial f(Z,\bar Z)=d\bar\partial f(Z,\bar Z)=\dots=X(Xf)+JX(JX f)-2(\Imm \partial f)[X,JX].$$
Now notice that 
$$\partial f(Y)=\partial f(\frac{Y-iJY}{2}+\frac{Y+iJY}{2})=\frac{1}{2}\partial f(Y-iJY)=\frac{1}{2}df(Y-iJY),$$ 
so $2(\Imm \partial f)(Y)=-df(JY)$. We may conclude that 
\begin{equation}\label{eq:Levi}
\partial\bar\partial f(Z,\bar Z)=X(Xf)+JX(JX f)-df(J[X,JX]).
\end{equation}
Assume for example that $\Sigma\hookrightarrow M$ is a complex curve. We may then choose $X,JX$ to correspond to complex coordinates on $\Sigma$, so that the Lie bracket vanishes. This proves, for example, the following well-known fact.

\begin{prop}
A function $f:M\rightarrow\R$ is PSH iff, restricted to any complex curve and choosing complex coordinates, it is locally subharmonic.
\end{prop}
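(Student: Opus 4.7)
The plan is to pivot on formula (\ref{eq:Levi}), which already isolates the crucial term $df(J[X,JX])$: as soon as we can arrange for $X$ and $JX$ to be coordinate vector fields in a complex chart on a complex curve, their Lie bracket vanishes and $\partial\bar\partial f(Z,\bar Z)$ reduces to $X(Xf)+JX(JXf)$, which is exactly the flat Laplacian of $f$ in those complex coordinates. Since subharmonicity in real dimension two is conformally invariant, this flat Laplacian captures subharmonicity of the restriction, and the two directions then become almost tautological.

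For the ``only if'' direction, let $\Sigma\hookrightarrow M$ be a complex curve and pick local complex coordinates $(x,y)$ on $\Sigma$. The coordinate fields $\partial_x,\partial_y$ satisfy $J\partial_x=\partial_y$ (since $\Sigma$ is complex, $J$ restricts to the induced complex structure on $\Sigma$) and commute. Applying (\ref{eq:Levi}) to $X:=\partial_x$ at any point of $\Sigma$ gives
\[
\partial\bar\partial f(Z,\bar Z)=\partial_x(\partial_x f)+\partial_y(\partial_y f)=\Delta(f_{|\Sigma}),
\]
so if the left-hand side is non-negative on $M$ then $f_{|\Sigma}$ is subharmonic in these coordinates.

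For the converse, fix $p\in M$ and $X\in T_pM$; we must show $\partial\bar\partial f(Z,\bar Z)\geq 0$ at $p$ for $Z=X-iJX$. Choose local holomorphic coordinates $z_1,\dots,z_n$ on $M$ centred at $p$ and let $v\in T_p^{1,0}M$ be the $(1,0)$-vector corresponding to $X$; the complex line $\{tv:t\in\C\}$ then defines, via the coordinate chart, a complex curve $\Sigma$ through $p$ whose tangent space at $p$ is $\Span(X,JX)$. On $\Sigma$ pick complex coordinates $(x,y)$ with $\partial_x|_p=X$, $\partial_y|_p=JX$. The same computation as above yields $\partial\bar\partial f(Z,\bar Z)_p=\Delta(f_{|\Sigma})_p$, and the hypothesis that $f_{|\Sigma}$ is subharmonic in complex coordinates makes the right-hand side non-negative.

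The only non-formal step is the construction of a complex curve tangent to a prescribed $J$-invariant plane at $p$, and this is what forces the use of an integrable complex structure (or at least local holomorphic coordinates); in an almost complex setting the converse would fail for the same reason. Once this standard existence point is in hand, the rest is just reading off (\ref{eq:Levi}).
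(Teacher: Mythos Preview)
Your argument is correct and follows essentially the same approach as the paper: both pivot on formula (\ref{eq:Levi}) and observe that for coordinate vector fields coming from a complex chart on a complex curve the bracket term drops out, reducing the Levi form to the flat Laplacian of the restriction. The paper is terser and leaves the converse direction implicit; you spell out the construction of a complex curve tangent to a prescribed $J$-invariant plane via holomorphic coordinates, which is the right way to complete the ``if'' half.
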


This implies a maximum principle: if $f$ is PSH on $M$ and has a maximum point, then it is subharmonic on each complex line through that point, thus (by an open/closed argument) constant.

As before, this has strong consequences on submanifold geometry.

1. Any region where $M$ admits a strict PSH function $f$ cannot contain compact complex submanifolds. Geometrically, we can think of these as regions where a K\"ahler structure admits a potential function. Indeed, the restriction of $f$ to the submanifold would again be strictly PSH and this would contradict the maximum principle. Thus, for example, $f:=\sum z_i\bar z_i$ shows that there exist no compact complex submanifolds in $\C^n$.

2. Let $L$ be a positive holomorphic line bundle over $M$, ie the integral class $c_1(L)$ contains a strictly positive (1,1) form. $M$ is then projective, so the $\partial\bar\partial$-lemma allows us to build a Hermitian metric $h$ on $L$ with positive curvature. Assume given a holomorphic section $\sigma$, vanishing on $Z\subseteq M$. On $M\setminus Z$ let us define $H:=h(\sigma,\sigma)$. The curvature can then be written $i\partial\bar\partial(-\log H)$, so $-\log H$ is strictly PSH. It follows that no compact complex submanifold may be contained in $M\setminus Z$, ie the divisor of $\sigma$ intersects all such submanifolds (as implied by the Nakai positivity criterion).

\ 

We are interested in the following result, due to Lassalle \cite{Lassalle}. Given a compact Lie group $G$, we will let $G^c$ denote its complexification. The space $G^c/G$ then admits a natural notion of convexity: a function $F:G^c/G\rightarrow \R$ is convex iff it is convex along each real 1-parameter curve obtained as the projection of a complex 1-parameter subgroup of $G^c$.

\begin{thm}\label{thm:pluripotential} 
Consider the projection $\pi:G^c\rightarrow G^c/G$.
\begin{enumerate}
\item Assume $f=\pi^*F$. If $f$ is PSH then $F$ is convex. 

Furthermore, if $G$ is Abelian then $f$ is PSH iff $F$ is convex.
\item Choose any $f$ on $G^c$. Let $F$ denote the function on $G^c/G$ obtained, at each point, as the fibre-wise integral of $f$ wrt the Haar measure on $G$. If $f$ is PSH then $\pi^*F$ is PSH and invariant on $G^c$, so $F$ is convex.
\item Choose any $f$ on $G^c$. Let $F$ denote the fibre-wise supremum on $G^c/G$. If $f$ is PSH then $\pi^*F$ is PSH and invariant on $G^c$, so $F$ is convex.
\end{enumerate}
\end{thm}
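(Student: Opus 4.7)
The plan is to exploit two structural features of the projection $\pi \colon G^c \to G^c/G$. First, right multiplication by any $h \in G$ is a holomorphic isometry of $G^c$, so $G$ acts by fibre-preserving holomorphic isometries; in particular, for any PSH function $f$ on $G^c$, every right translate $R_h^* f$ is again PSH. Second, the Cartan decomposition $\mathfrak{g}^c = \mathfrak{g} \oplus i\mathfrak{g}$ identifies, at $e \in G^c$, the vertical distribution with $\mathfrak{g}$ and a horizontal complement with $i\mathfrak{g}$, and the complex structure $J$ interchanges these; this is what converts a PSH condition on $f$ into a horizontal convexity condition on $F$.

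For Part 1, fix $Z \in \mathfrak{g}^c$ and set $\gamma_Z(t) := \pi(\exp(tZ))$. Since $f = \pi^* F$, convexity of $F \circ \gamma_Z$ is the convexity of $u(t) := f(\exp(tZ))$. I would restrict $f$ to the complex curve $C_Z := \{\exp(wZ) : w \in \C\}$: the map $w \mapsto \exp(wZ)$ is entire and holomorphic, so $w \mapsto f(\exp(wZ))$ is subharmonic by the preceding proposition characterising PSH via restriction to complex curves. Writing $w = s + it$, the key step is to use $G$-invariance and the polar decomposition of $\exp(wZ)$ to show that subharmonicity collapses to convexity in $t$. In the Abelian case one has $\exp(wZ) = \exp(sX - tY)\exp(i(sY + tX))$ for $Z = X + iY$, hence $f(\exp(wZ)) = F(i(sY + tX))$ by invariance; subharmonicity then reads $\Hess(F)(iY, iY) + \Hess(F)(iX, iX) \geq 0$, and specialising to $X = 0$ yields convexity along $\gamma_{iY}(t) = itY$ (which exhausts all projected real curves in the Abelian case). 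Read in reverse, the same computation gives the Abelian converse.

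For Parts 2 and 3 the idea is to reduce to Part 1 by exhibiting $\pi^* F$ as an invariant PSH function on $G^c$. For the integral, left-invariance of the Haar measure gives $\pi^* F(g) = \int_G f(gh)\,d\mu(h) = \int_G (R_h^* f)(g)\,d\mu(h)$; each $R_h^* f$ is PSH, and positivity of the Levi form is preserved under integration (differentiation under the integral sign), so $\pi^* F$ is PSH and manifestly invariant. For the supremum, $\pi^* F(g) = \sup_{h \in G}(R_h^* f)(g)$ is the upper envelope of a compact family of PSH functions, and the upper semicontinuous regularisation of such a supremum is again PSH in the sense admitted by pluri-potential theory; invariance is automatic. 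Part 1 then delivers the convexity of $F$ in both cases.

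The principal obstacle is the non-Abelian instance of Part 1, where the polar factors of $\exp(wZ)$ do not disentangle into elementary exponentials and the Levi-form identity \eqref{eq:Levi} carries a non-vanishing bracket term $-df(J[X, JX])$: for vertical $X$, $[X, JX]$ generally has a horizontal component on which $df$ does not vanish. Handling these corrections calls for an infinitesimal argument at $\gamma_Z(0)$, using $G$-invariance to annihilate vertical derivatives of $f$, Proposition \ref{prop:hessian} to identify horizontal Hessians of $f$ with those of $F$, and tracking that the bracket contribution enters with the correct sign to strengthen rather than weaken the Hessian inequality. It is exactly this positive correction that blocks the converse outside the Abelian setting.
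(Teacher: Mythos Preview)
Your treatment of Parts 2 and 3 is essentially identical to the paper's: right translates $R_h^*f$ are PSH because the $G$-action is holomorphic, integration (resp.\ taking the supremum and regularising) preserves plurisubharmonicity, and then Part 1 yields convexity of $F$. No objections there.

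The gap is in Part 1, specifically the non-Abelian forward implication. You write that for vertical left-invariant $X$ the bracket $[X,JX]$ ``generally has a horizontal component on which $df$ does not vanish'', and that the argument must therefore track the sign of this correction. This is a misconception about complex Lie groups. On $G^c$ the complex structure $J$ is multiplication by $i$ on the Lie algebra, and the bracket is $\C$-bilinear; hence for \emph{any} left-invariant $X$ one has $[X,JX]=[X,iX]=i[X,X]=0$. The bracket term in the Levi identity~\eqref{eq:Levi} therefore vanishes identically, not merely up to sign, and the identity collapses to
\[
\partial\bar\partial f(Z,\bar Z)=X(Xf)+JX(JXf).
\]
Taking $X\in\mathfrak g$ vertical gives $X(Xf)=0$ by $G$-invariance, while $(JX)^2f$ is exactly the second derivative of $F$ along the projected curve $t\mapsto \pi(g\exp(tiX))$. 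This is the whole proof of the forward direction, Abelian or not; there is nothing to track. Your route via restriction to the holomorphic curve $w\mapsto\exp(wZ)$ is not wrong in spirit, but in the non-Abelian case the polar decomposition of $\exp(wZ)$ does not factor as you need, and the ``infinitesimal argument'' you fall back on is both unnecessary and, as stated, unsubstantiated.

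The place where genuinely non-vanishing brackets appear is the \emph{converse}: writing a general $X=U+JV$ with $U,V\in\mathfrak g$, the Levi form picks up terms $df([U,JV]-[V,JU])$, which need not vanish unless $G$ is Abelian. That, and not any obstacle in the forward direction, is what blocks the ``iff'' in the non-Abelian case.
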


\begin{proof}
Part 1: In equation \ref{eq:Levi}, assume $X$ is a left-invariant vector field on $G^c$. Then the Lie bracket term disappears because the Lie algebra is complex. If furthermore $X$ is vertical wrt the projection then $X^2f=0$, while $(JX)^2f$ coincides with the second derivative of $F$. This proves the first statement. 

The reason the ``iff" does not hold in general is that we have restricted our attention to the subspace of vertical vectors. A general $X$ is of the form $X=U+JV$. Substituting and simplifying we are left with the identity
$$\partial\bar\partial f(Z,\bar Z)=(JU)^2f+(JV)^2f+df([U,JV]-[V,JU]).$$
If $G$ is Abelian these Lie brackets vanish and we obtain the converse result.

Part 2: To prove the PSH condition it is convenient to adopt a slightly different viewpoint. Consider the function 
$$\tilde{f}:G^c\times G\rightarrow \R, \ \ \tilde{f}(h,g):=f(hg).$$
Since the $G$-action is holomorphic, each function $h\mapsto\tilde{f}(h,g)$ is PSH on $G^c$. Now integrate this family of functions wrt $G$. Differentiating under the integral (or, in the weak case, see \cite{Lelong} Thm 2.2.1) shows that this produces an invariant PSH function on $G^c$. By construction this function coincides with $\pi^*F$. Part 1 then shows that it corresponds to a convex function below.

Part 3: Consider the family of PSH functions $h\mapsto\tilde{f}(h,g)$ as above, and the corresponding function $h\mapsto\sup\{\tilde{f}(h,g):g\in G\}$. By construction, the latter coincides with the function $F$. It is a standard result in classical pluri-potential theory that its upper semi-continuous regularization $F^*$ is PSH. In our case $F$ is continuous so $F^*=F$. It is also invariant on $G^c$, so we can conclude as before.
\end{proof}

\paragraph{Conclusions.}Theorem \ref{thm:pluripotential} is interesting because the operators belong to very different realms: PSH above, convex below. Notice that (i) the statement makes sense only because the quotient space admits an intrinsic notion of convexity, (ii) convexity is a global property on $B$.

Our goal in the following sections will be to relate different potential theories using Riemannian submersions. Lassalle's result will be our model. To define convexity below we will use the Riemannian structure on $B$. Proposition \ref{p:potential}, Part 2, showcases two different types of assumptions. As already noted, if we focus on a critical point the Hessian does not actually depend on the metric; any convexity results at that point are perhaps better classified as stability results for the critical point. If instead we focus on geometric assumptions on a fibre, we can obtain global convexity statements simply by assuming that all fibres share that same property, as in Corollary \ref{cor:potential}.

\section{Generalized pluri-potential theories}\label{s:HL}

In the previous section we introduced two second-order operators on functions, defined using different geometric structures: $g$ and $J$. We will see below that when $M$ is K\"ahler, ie the two geometric structures are compatible, the two operators are closely related. 

It turns out that these operators are part of a much broader picture, concerning the Hessian and Grassmannians, developed in particular by Harvey and Lawson in many papers across the past 15 years, eg \cite{HL2}, \cite{HL3}. The basic idea is as follows.

\begin{definition}
Let $M$ be a Riemannian manifold. Let $\mathcal{G}$ denote a subset of the Grassmannian of $p$-planes in $TM$. 

A function $f:M\rightarrow\R$ is $\mathcal{G}$-PSH if $\tr_{|\pi}\Hess(f)\geq 0$, for all $\pi\in\mathcal{G}$.
\end{definition}

A few examples:
\begin{itemize}
\item Choose $\mathcal{G}$ to be the set of all $p$-planes in $TM$. One can prove that $f$ is $\mathcal{G}$-PSH iff the sum of the first $p$ eigenvalues of $\Hess(f)$ is non-negative.

When $p=1$ we obtain the usual notion of convexity. When $p=\dim(M)$ we obtain the usual  subharmonic functions.
\item Let $M$ be a K\"ahler manifold. Choose $\mathcal{G}$ to be the Grassmannian of complex lines. Using the fact that the Levi-Civita connection is torsion-free and the K\"ahler condition, we may write $J[X,JX]=J(\nabla_XJX-\nabla_{JX}X)=-\nabla_XX-\nabla_{JX}JX$, so
\begin{equation}\label{eq:kahlerPSH}
\partial\bar\partial f(Z,\bar Z)=\Hess(f)(X,X)+\Hess(f)(JX,JX).
\end{equation}
This shows that $f$ is PSH in the complex-analytic sense iff it is $\mathcal{G}$-PSH.
\item Let $\alpha$ be a calibrating $p$-form \cite{HL1} on $M$, ie: for all oriented $p$-planes $\pi$, $\alpha_{|\pi}\leq \vol(\pi)$. Choose $\mathcal{G}$ to be the set of calibrated $p$-planes, ie $\alpha_{|\pi}= \vol(\pi)$.
\end{itemize}
In each case the notion of $\mathcal{G}$-PSH functions produces a function theory on $M$. Under fairly general assumptions, Harvey-Lawson prove that all standard analytic results, such as the max principle, continue to hold. As seen, in some cases one recovers classical function theories. In other cases the function theory is new. 

The relevance to geometry is that, in each case, $\mathcal{G}$-PSH functions control, via the max principle and in the sense seen above, the class of minimal submanifolds defined by the condition that the tangent spaces lie in $\mathcal{G}$.

\ 

We are particularly interested in the calibrated case. We choose to focus on the following two instances.

\paragraph{K\"ahler manifolds.}Let $M$ be a K\"ahler manifold, with complex structure $J$ and K\"ahler 2-form $\omega$. The complex structure defines the PSH condition; the corresponding function theory is then closely related to complex submanifolds, defined by the condition $J(T\Sigma)=T\Sigma$. Alternatively, the K\"ahler form is a calibrating form. The calibrated planes are the complex lines. As seen, the two function theories coincide.

An important aspect of this geometry is that we can use $\omega$ to also define the class of Lagrangian submanifolds $\Sigma^n\hookrightarrow M^{2n}$, via the condition $\omega_{|T\Sigma}\equiv 0$ or, equivalently, $J(T\Sigma)\perp T\Sigma$: this second point of view emphasizes the fact that complex and Lagrangian submanifolds are, roughly speaking, defined by opposite conditions.

The contrast between these two classes of submanifolds will be a key element in the following sections. 

\paragraph{$G_2$ manifolds.}Assume $M$ has dimension 7. A $G_2$ structure on $M$ is a 3-form $\phi$ pointwise isomorphic to the standard 3-form
$$123+1(45+67)+2(46-57)-3(47+56)$$
on $\R^7$, where we use the short-hand notation $123=e^1\wedge e^2\wedge e^3$, etc.

General $G_2$ theory, see eg \cite{G2lectures}, shows that $\phi$ defines on $M$ a Riemannian metric and an orientation. The dual 4-form $\psi=\star\phi$ is pointwise isomorphic to
$$4567+23(45+67)-13(46-57)-12(47+56).$$
Both $\phi$ and $\psi$ are calibrating forms. The corresponding calibrated 3-planes (known as associative, modelled on $\mbox{span}\{e_1,e_2,e_3\}$) and 4-planes (known as coassociative, modelled on $\mbox{span}\{e_4,e_5,e_6,e_7\}$), are orthogonal. Both define a corresponding class of $\mathcal{G}$-PSH functions. The fact that there exist such function theories, with good analytic properties, specific to $G_2$ manifolds is a surprising consequence of Harvey-Lawson's work. 

We shall focus on the following choice.

\begin{definition}
Let $\mathcal{G}$ be the set of associative 3-planes $\pi$, defined by the condition $\phi_{|\pi}=\vol(\pi)$.
The class of $\phi$-PSH functions on $M$ is the class of $\mathcal{G}$-PSH functions determined by this set. 
\end{definition}

The calibrated submanifolds corresponding to $\phi$, ie $\phi_{|T\Sigma}=\vol_{\Sigma}$, are known as associative submanifolds. They are 3-dimensional. The calibrated submanifolds corresponding to $\psi$, ie $\psi_{|T\Sigma}=\vol_{\Sigma}$, are known as coassociative submanifolds. They are 4-dimensional. One can prove that, up to orientation, $\Sigma$ is coassociative iff $\phi_{|T\Sigma}\equiv 0$. 

Notice: the calibrated condition means that associative submanifolds play a role analogous to complex submanifolds, while the condition $\phi_{|T\Sigma}\equiv 0$ makes coassociative submanifolds analogous to Lagrangian submanifolds.

\ 

To conclude, recall that the general theory \cite{HL1} shows that, if a calibrating form is closed ie $d\alpha=0$, then calibrated submanifolds are automatically volume-minimizing. It follows that they are controlled by the corresponding pluri-potential theory.

The closed condition $d\omega=0$ is part of the definition of K\"ahler manifolds. This implies not only that complex curves (more generally, complex submanifolds) are volume-minimizing, but also provides a sufficient condition for the (local) existence of Lagrangian submanifolds.  In the $G_2$ case we will usually require $d\phi=0$. This implies that associative submanifolds are volume minimizing and provides a sufficient condition for the (local) existence of coassociative submanifolds.

\ 

\textit{Remark. }Let $\Sigma$ be a compact submanifold of $M$. Let $r$ denote the distance function from $\Sigma$, defined on a $C^0$-neighbourhood of $\Sigma$. Understanding the properties of $r$ generally requires a careful study of geodesics and of the variation formulae for length. Under various assumptions, eg $\Sigma=\{p\}$ or $\Sigma$ totally geodesic and $M$ a Hadamard manifold, $r^2$ is known to be convex. Various other constructions of geometrically interesting convex functions on $M$ are known \cite{Shiga}. 

Harvey-Wells provide local examples of strict PSH functions in $\C^n$. Recall that a submanifold $\Sigma^n$ in $\C^n$ is totally real iff $T\Sigma$ contains no complex lines, ie $J(T\Sigma)\cap T\Sigma=\{0\}$. 
If $\Sigma$ is totally real then \cite{HarveyWells} $r^2$ is PSH.

Harvey-Lawson have extended this result to other classes of $\mathcal{G}$-PSH functions. It follows, for example, that given any compact coassociative submanifold there exists a $C^0$-neighbourhood containing no compact associative submanifolds.

Global existence results are of a different nature. In the complex case this is equivalent to the Stein condition.

\ 

\textit{Remark. }In many cases, $\mathcal{G}$-PSH functions can alternatively be defined via a positivity condition on an appropriate tensor.

The most obvious example is convexity, defined via $\Hess(f)\geq 0$. In the K\"ahler case, $f$ is PSH iff the 2-form $i\partial\bar\partial(f)$ has the property  $i\partial\bar\partial(f)(X,JX)\geq 0$, for all $X$. In the $G_2$ case, if $d\phi=0$ then $f$ is $\mathcal{G}$-PSH iff the 3-form 
$$\mathcal{H}^\phi(f):=d(\nabla f\lrcorner\phi)-\nabla_{\nabla f}\phi$$ 
has the following property: $\mathcal{H}^\phi(f)_{|\pi}\geq 0$, for all calibrated 3-planes $\pi$.

\ 

\section{Invariant functions}\label{s:invariant}

We shall investigate potential theories on manifolds $M$, $B$ related by a Riemannian submersion. For the moment we shall concentrate on invariant functions. Proposition \ref{p:potential} covers the general Riemannian case, so we shall focus here on  K\"ahler and $G_2$ manifolds.

\paragraph{K\"ahler manifolds.}It is clear from the definitions that
$$\mbox{Convex}\Rightarrow\mbox{PSH}\Rightarrow\mbox{Subharmonic}.$$
 
Our model Theorem \ref{thm:pluripotential}, Part 1, suggests however the possibility of a converse implication: $\mbox{PSH}\Rightarrow\mbox{Convex}$. In that theorem the fibres are modelled by $G$; the vertical spaces are modelled by the Lie algebra $\mathfrak{g}$. The fibres are thus totally real in $G^c$. The Abelian assumption implies that the natural transverse subspace, $J\mathfrak{g}$, is integrable. In the K\"ahler setting Lagrangian submanifolds are the natural analogue of totally real submanifolds. Proposition \ref{p:potential} has already highlighted the importance of an integrability condition. The following result thus seems to be the natural link between Proposition \ref{p:potential} and Theorem \ref{thm:pluripotential}.

\begin{prop}\label{p:kahler}
Let $M$ be a K\"ahler manifold and $\pi:M\rightarrow B$ a Riemannian submersion.  
Choose an invariant function $f=\pi^*F$ and a fibre $\Sigma_b$.
\begin{enumerate}
\item Assume $\Sigma_b$ is Lagrangian. If, at each point of $\Sigma_b$, either 
\begin{itemize}
\item[(i)] $\Sigma_b$ is contained in the critical locus of $f$, or
\item[(ii)] $T=0$ and $A=0$
\end{itemize}
then $f$ is PSH iff $f$ is convex.

In particular, $F$ is convex at $b\in B$ iff $f$ is PSH at each point of $\Sigma_b$.
\item Assume $\Sigma_b$ is minimal. If $f$ is PSH at each point of $\Sigma_b$, then $F$ is subharmonic at $b$.
\end{enumerate}
\end{prop}
\begin{proof}
Part 1: We already know that $f$ convex implies $f$ PSH.

Conversely, assume $f$ is PSH. We have seen in Proposition \ref{p:potential} that 
$\Hess_M(f)=\pi^*(\Hess_B(F))$. In particular, given $X$ horizontal and $Y$ vertical, 
$$\Hess_M(f)(Y,Y)=0, \ \ \Hess_M(f)(X,Y)=0.$$ 
The Lagrangian condition implies that $JX$ is vertical so, using Equation (\ref{eq:kahlerPSH}),
$$\Hess_M(f)(X,X)=\Hess_M(f)(X,X)+\Hess_M(f)(JX,JX)\geq 0.$$
It follows that $f$ is convex. 

Part 2: PSH implies subharmonic, so we can apply Proposition \ref{p:potential}.
\end{proof}

\textit{Remark. }Consider the function $f(x,y)=2x^2-y^2$. It is PSH and constant on the Lagrangian curves defined by the hyperbolae $2x^2-y^2=c$, but not convex. This shows the need for additional assumptions, as in the proposition.

Notice that Part 2 does not require the Lagrangian condition. If $\Sigma$ is Lagrangian and $\{e_j\}$ is an ON basis for $T_p\Sigma$, then $\Delta_Mf=2i\partial\bar\partial f(e_j,Je_j)$. This generalizes the usual formula 
$\Delta f\,dxdy=2i\partial\bar\partial f$ in $\C$. 

\ 

Proposition \ref{p:kahler} has the following application.

\begin{cor}\label{cor:unique}
Let $M$ be a K\"ahler manifold and $\pi:M\rightarrow B$ a Riemannian submersion with Lagrangian fibres. Let $f=\pi^*F$ be strictly PSH. Then every critical point of $F$ is a local minimum.
\end{cor}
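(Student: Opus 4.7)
The plan is to show that the Hessian $\Hess_B(F)$ is positive definite at any critical point $b$, which combined with the critical-point condition yields a local minimum by the standard second-derivative test. The key ingredient is Proposition \ref{p:kahler}(1), which relates $\Hess_B(F)$ to the Levi form of $f$ precisely when the fibre lies in the critical locus of $f$.

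First I would verify that the fibre $\Sigma_b$ over a critical point $b$ of $F$ is entirely contained in the critical locus of $f$. This has two components: since $f=\pi^*F$ is invariant along fibres, $df$ vanishes on every vertical vector at every point of $M$; and since $dF_b=0$, the pullback $\pi^*dF$ vanishes on horizontal vectors at every point of $\Sigma_b$. Combining these, $df\equiv 0$ on $\Sigma_b$, so $\Sigma_b\subset\{df=0\}$.

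Next, since the fibres are Lagrangian and $\Sigma_b$ is in the critical locus of $f$, Proposition \ref{p:kahler}(1) applies at every point of $\Sigma_b$, giving
\[
\Hess_B(F)(X,X)=2i\,\partial\bar\partial f(X,JX)
\]
for every horizontal vector $X$. Because $f$ is strictly PSH, the right-hand side is strictly positive whenever $X\neq 0$. Translating this to $B$ via the isometry $d\pi:H\to TB$, we conclude that $\Hess_B(F)_b$ is positive definite.

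Finally, a critical point of a smooth real-valued function with positive-definite Hessian is a strict local minimum, so $b$ is a local minimum of $F$. The only mildly delicate step is the first one, observing that invariance plus criticality of $F$ forces the entire fibre into the critical locus of $f$; everything after that is an immediate consequence of the earlier propositions.
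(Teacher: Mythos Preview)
Your argument is correct and is exactly the intended one: the paper presents this corollary as an immediate application of Proposition~\ref{p:kahler}(1), and your proof spells out precisely that application---using the critical-point hypothesis to place $\Sigma_b$ in the critical locus of $f$, then invoking the identity $\Hess_B(F)(X,X)=2i\,\partial\bar\partial f(X,JX)$ together with strict plurisubharmonicity.
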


Notice: appropriate conditions on $B$, together with Morse theory or min-max arguments, would ensure that $F$ has at most one such point. 

\ 

\textit{Remark. }It is clear that strict convexity implies strict PSH. The above proof shows that the converse is not necessarily true, eg $f(x,y)=x^2$. 

\paragraph{$G_2$ manifolds.}The situation for $G_2$ manifolds is closely analogous. It is clear from the definitions that convexity implies $\phi$-PSH. We have already mentioned that coassociative submanifolds are the analogue of Lagrangian submanifolds. The following result confirms this. 

\begin{prop}\label{p:G2}
Let $M$ be a manifold with a closed $G_2$ structure and $\pi:M\rightarrow B$ a Riemannian submersion.  
Choose an invariant function $f=\pi^*F$ and a coassociative fibre $\Sigma_b$.
\begin{enumerate}
\item If, at each point of $\Sigma_b$, either
\begin{itemize}
\item[(i)] $\Sigma_b$ is contained in the critical locus of $f$, or 
\item[(ii)] $T=0$ and $A=0$
\end{itemize}
then the conditions: $f$ $\phi$-PSH, $f$ convex, $F$ convex (at $b\in B$) are equivalent.

In particular, if all fibres are coassociative and $f$ is strictly $\phi$-PSH then every critical point of $F$ is a local minimum.
\item If $f$ is $\phi$-PSH at each point of $\Sigma_b$ then $F$ is subharmonic at $b$.
\end{enumerate}
\end{prop}
\begin{proof}
General $G_2$ theory shows that, given any unit horizontal $X\in (T\Sigma_b)^\perp$, there exist unit vertical vectors $u,v\in T\Sigma_b$ such that $\pi:=<X,u,v>$ is associative. For example, in the pointwise model we may use the transitivity of the $G_2$ action to assume $X=1$; we can then choose $u=4$, $v=5$. The assumptions imply that $\Hess_M(f)=0$ in the directions $u,v$. It follows that
$$\Hess_M(f)(X,X)=\Hess_M(f)(X,X)+\Hess_M(f)(u,u)+\Hess_M(f)(v,v).$$ 
The proof continues as for Proposition \ref{p:kahler}, Part 1.

For the second part it suffices to choose the associative plane $\pi:=(T\Sigma_b)^\perp$.
\end{proof}
Compared to Proposition \ref{p:kahler}, notice that in Part 2 we can replace minimality with the assumption that $\Sigma_b$ be coassociative.

\section{Second variation formulae}\label{s:secondvar}

Let $\Sigma^k$ be a compact oriented manifold. Let $(M^n,g)$ be a Riemannian manifold. Let $\iota_t:\Sigma\hookrightarrow M$ be a curve of immersions. The induced metric defines volume forms $\vol_t$ on the image submanifolds $\Sigma_t$. We may pull them back to $\Sigma$, obtaining the curve of volume forms $\iota_t^*\vol_t$. This allows us to compute the volume of the image submanifolds using the fixed manifold $\Sigma$: $\Vol(\Sigma_t)=\int_\Sigma \iota_t^*\vol_t$. To simplify the notation, we will often hide the role of $\iota_t$.

Set $Z:=\frac{d}{dt}\iota_t$. Wrt the submanifolds, we can write $Z$ as a sum of tangential and normal components: $Z=Z^T+Z^\perp$. As usual, $H=\tr_\Sigma(\nabla^\perp)$ will denote the mean curvature vector field.

We are interested in understanding how the volume forms and the volume functional change with $t$, both in the general setting and in the specific calibrated cases of interest to us.

\paragraph{The classical setting.}It is well-known that, in the most general Riemannian setting, the volume forms change according to the following formulae:
\begin{align}\label{eq:volvariation}
\frac{d}{dt}\vol_{t|t=0}&=(\div_{\Sigma}(Z^T)-H\cdot Z^\perp)\vol_0,\\
\label{eq:volsecvariation}
\frac{d^2}{dt^2}\vol_{t|t=0}&=(-(\nabla_{e_i}e_j,Z)^2-(R(e_i,Z)Z,e_i)+\mbox{div}_\Sigma((\nabla_Z Z)^T)\nonumber\\
&\ \ \ -(H,(\nabla_Z Z)^\perp)+\|(\nabla_{e_i}Z)^\perp\|^2+(H,Z)^2)\vol_0,
\end{align}
where $\div_\Sigma$ and $H$ are calculated wrt $\Sigma_0$ and $\nabla$, $R$ are the connection and curvature tensor of $M$. The first variation formula is straight-forward:
\begin{equation*}
\frac{d}{dt}\Vol(\Sigma_t)_{|t=0}=-\int_\Sigma (H\cdot Z^\perp)\vol_0.
\end{equation*}
In order to simplify the second variation formula, we shall assume $Z$ is always perpendicular to $\Sigma_t$: this will not change the volumes. Let us also assume $\Sigma_0$ is minimal: $H=0$. We then find \cite{Simons}:
$$\frac{d^2}{dt^2}\Vol(\Sigma_t)_{|t=0}=\int_\Sigma(-(\nabla_{e_i}Z\cdot e_j)^2-R(e_i,Z)Z\cdot e_i+(\nabla_{e_i}Z\cdot f_j)^2)\vol_0,$$
where $e_1,\dots,e_k$ is a orthonormal basis of $T_p\Sigma$ at any given point, $f_1,\dots,f_{n-k}$ is a orthonormal basis of $T_p\Sigma^\perp$.

\paragraph{K\"ahler manifolds.}It is an interesting fact that, when $\Sigma_0$ is minimal Lagrangian, the terms in the second variation formula can be combined in a different way to produce the following result, which is much stronger than its Riemannian analogue. 

To simplify, we will again restrict to normal variations $Z$ and use the isomorphism, valid for Lagrangian submanifolds,
\begin{equation*}
T\Sigma^\perp\simeq\Lambda^1(\Sigma), \ \ Z\mapsto\zeta:=\omega(Z,\cdot)_{|T\Sigma}.
\end{equation*}
Then \cite{Oh} (see also \cite{PaciniRaffero})
\begin{align}\label{eq:kahlervolsecvariation}
\frac{d^2}{dt^2}\vol_{t|t=0}&=(1/2)\Delta_\Sigma \|Z\|^2+(\Delta_\Sigma\zeta,\zeta)-\Ric(Z,Z)+\div_\Sigma((\nabla_ZZ)^T),\\
\frac{d^2}{dt^2}\Vol(\Sigma_t)_{|t=0}&=\int_\Sigma ((\Delta_\Sigma \zeta,\zeta)-\Ric(Z,Z))\vol_0\nonumber,
\end{align}
where $\Ric$ is the Ricci curvature of $M$ and $\Delta_\Sigma$ is the Hodge Laplacian on $\Sigma_0$.

\paragraph{$G_2$ manifolds.}Let $M$ be endowed with a closed $G_2$ structure $\phi$. This ensures that any initial compact coassociative submanifold generates a smooth moduli space $\mathcal{M}$ of coassociative deformations. The dimension of this space depends on the topology of $\Sigma$: specifically, it coincides with $b^2_+(\Sigma)$. 

Assume $\Sigma_0$ is minimal coassociative. As in the K\"ahler case, the second variation formula takes a specific form in this context. To simplify, we shall restrict to variations through coassociative submanifolds, ie tangent to $\mathcal{M}$. This will suffice for our purposes, below. We shall again restrict to normal variations $Z$. Then \cite{PaciniRaffero}
\begin{align}\label{eq:G2volsecvariation}
\frac{d^2}{dt^2}\vol_{t|t=0}&=d(\iota_Z\tau_2\wedge\iota_Z\phi)_{|\Sigma}+(\tau_2\wedge\gamma_Z)_{|\Sigma}+(\iota_Zd\tau_2\wedge\iota_Z\phi)_{|\Sigma},\\
\frac{d^2}{dt^2}\Vol(\Sigma_t)_{|t=0}&=\int_\Sigma\tau_2\wedge\gamma_Z+\iota_Zd\tau_2\wedge\iota_Z\phi\nonumber,
\end{align}
where $\tau_2\in\Lambda^2_{14}(M)$ is the torsion form of $(M,\phi)$ and $\gamma_Z\in\Lambda^2_-(\Sigma_0)$ is defined in terms of the second fundamental form: 
$$\gamma_Z(X_1,X_2):=\iota_Z\phi((\nabla_{X_1} Z)^T,X_2)+\iota_Z\phi(X_1,(\nabla_{X_2} Z)^T).$$
It is shown in \cite{Bryant}, see also \cite{PaciniRaffero}, that $d\tau_2=1/2(\star(\tau_2\wedge\tau_2)-\i(\Ric))$, for a certain map $\i:\Sym^2\rightarrow\Lambda^3$: this indicates that the Ricci tensor plays a role in the coassociative second variation formula, similar to its role in the Lagrangian case. Stability of $\Sigma_0$ is clearly related to the positivity of the bilinear forms
$$Q_\pi:\pi^\perp\times \pi^\perp\rightarrow \Lambda^4(\pi),\ \ Q_\pi(Z_1,Z_2):=(\iota_{Z_1}d\tau_2\wedge\iota_{Z_2}\phi)_{|\pi},$$
for all $\pi=T_p\Sigma_0$. One can show \cite{PaciniRaffero} that $Q_\pi$ are symmetric.

\begin{definition}
We will say that a manifold $M$ endowed with a closed $G_2$ structure satisfies the positivity condition iff $Q_\pi\geq 0$, for all $\pi$ in the Grassmannian of coassociative 4-planes.
\end{definition}

For example, \cite{PaciniRaffero} shows that the class of $G_2$ structures defined by a certain ``quadratic condition" \cite{Bryant}, depending on a parameter $\lambda$, satisfies the positivity condition when $|\lambda|\leq 1/\sqrt{21}$. In particular, this hold for the class of ``Extremally Ricci-Pinched" $G_2$ manifolds \cite{Bryant}, corresponding to the case $\lambda=1/6$.

\paragraph{Application.}We have seen how to use the distance function to produce geometrically meaningful convex/PSH functions near certain submanifolds. We can use the above results to obtain a second class of examples. 

Assume $\pi:M\rightarrow B$ is a Riemannian submersion with compact oriented fibres. We can then restrict the volume functional to the submanifolds defined by the fibres, obtaining a function on $B$, thus an invariant function on $M$. The second variation integral formulae seen above, together with Propositions \ref{p:potential}, \ref{p:kahler}, \ref{p:G2}, then lead to the following conclusions.

\begin{cor}\label{cor:volume}
In the setting described above:
\begin{enumerate}
\item If $M$ is Riemannian with non-positive sectional curvature, then the volume functional is convex along any totally geodesic fibre.
\item If $M$ is K\"ahler with non-positive Ricci curvature, then the volume functional is convex (equivalently, PSH) along any minimal Lagrangian fibre.
\item If $M$ has a closed $G_2$ structure which satisfies the positivity condition and the fibres are coassociative, then the volume functional is convex (equivalently, PSH) along any totally geodesic fibre.
\end{enumerate}
\end{cor}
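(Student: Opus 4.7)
All three parts follow the same template. Given $b\in B$ and $X\in T_bB$, let $\alpha$ be the geodesic in $B$ with $\dot\alpha(0)=X$. By the lemma of Section \ref{s:hessian}, horizontally lifting $\alpha$ from each point of $\Sigma_b$ produces a geodesic in $M$; collecting these lifts yields a variation $\iota_t:\Sigma_b\to M$ whose image at time $t$ is the fibre $\Sigma_{\alpha(t)}$ and whose variation vector field is the horizontal basic lift $Z$ of $X$. Since $Z$ is horizontal while the fibres are vertical, the variation is everywhere normal; since each flow line of $Z$ is a geodesic, $\nabla_Z Z\equiv 0$ along $\Sigma_b$. By construction $F(\alpha(t))=\Vol(\Sigma_{\alpha(t)})$, hence
\begin{equation*}
\Hess_B(F)(X,X)=\frac{d^2}{dt^2}\Vol(\Sigma_t)_{|t=0},
\end{equation*}
and each part reduces to showing that this second variation is non-negative.

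For Part 1, substitute $H=0$, $Z^T=0$ and $(\nabla_Z Z)^T=0$ into the Riemannian formula \ref{eq:volsecvariation}. Using $(e_j,Z)\equiv 0$ we have $(\nabla_{e_i}e_j,Z)=-(\nabla_{e_i}Z,e_j)$, so the integrand reduces to $-\sum_{i,j}(\nabla_{e_i}Z\cdot e_j)^2-R(e_i,Z)Z\cdot e_i+\|(\nabla_{e_i}Z)^\perp\|^2$. The curvature contribution is non-negative by the sectional curvature hypothesis and the last term is a sum of squares. The first term vanishes pointwise: for vertical $e_i,e_j$ and horizontal basic $Z$ its integrand equals the squared O'Neill component $T_{e_i}Z=(\nabla_{e_i}Z)^{\ver}$, which is identically zero on a totally geodesic fibre.

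For Part 2, apply \ref{eq:kahlervolsecvariation} with $\zeta:=\omega(Z,\cdot)_{|T\Sigma_b}$; integration by parts gives $\int_{\Sigma_b}(\Delta_{\Sigma_b}\zeta,\zeta)\vol_0=\|d\zeta\|^2+\|d^*\zeta\|^2\geq 0$, and $-\Ric(Z,Z)\geq 0$ by hypothesis. For Part 3, apply \ref{eq:G2volsecvariation}: by definition, $\gamma_Z$ depends only on the tangential parts $(\nabla_{X_i}Z)^T$, which for vertical $X_i$ and horizontal basic $Z$ are again the O'Neill $T$-tensor components, hence zero on a totally geodesic fibre, so the torsion contribution drops out and only $-\Ric(Z,Z)\geq 0$ remains. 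The parenthetical ``equivalently, PSH'' in each case follows from the corresponding proposition among \ref{p:potential}, \ref{p:kahler}, \ref{p:G2} applied to the invariant function $f=\pi^*F$.

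The main subtlety is recognising that the apparently different ``bending'' terms across the three second variation formulae, namely $(\nabla_{e_i}Z\cdot e_j)^2$ in the Riemannian case and $\gamma_Z$ in the $G_2$ case, are each controlled by the O'Neill $T$-tensor of the fibration, and that the totally geodesic hypothesis is exactly what suppresses them. The K\"ahler case is the outlier: formula \ref{eq:kahlervolsecvariation} already packages the bending contribution as a Hodge quadratic form, manifestly non-negative, which is why one can there weaken ``totally geodesic'' to merely ``minimal.''
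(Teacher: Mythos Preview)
Your proposal is correct and follows precisely the route the paper indicates (its proof is the single sentence invoking the second variation formulae together with Propositions \ref{p:potential}, \ref{p:kahler}, \ref{p:G2}); you have simply filled in the omitted details. The O'Neill-tensor remark explaining why ``totally geodesic'' is needed in Parts 1 and 3 while ``minimal'' suffices in Part 2 is a nice clarification not made explicit in the paper.
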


\textit{Remark. }The differences between Parts 2, 3 correspond to the differences in the second variation formulae. In the $G_2$ case we need all fibres to be coassociative because the formula applies only to coassociative variations, and we need stronger assumptions to control the sign of the integrand.

\section{Integral and supremum functions}\label{s:integralsup}

Again following the tracks of Theorem \ref{thm:pluripotential}, we now turn to the case of integral and supremum functions. We shall assume all fibres are compact and oriented.

\paragraph{Riemannian manifolds.}

\begin{thm}\label{thm:potential_bis}
Let $\pi:M\rightarrow B$ be a Riemannian submersion. Choose a function $f:M\rightarrow \R$. For any $b\in B$, let $F(b):=\int_{\Sigma_b}f\vol$ denote the fibre-wise integral of $f$.
\begin{enumerate}
\item Assume $M$ has non-positive sectional curvature and a fibre $\Sigma_b$ is totally geodesic. If $f$ is non-negative and convex (or subharmonic) at each point of $\Sigma_b$, then $F$ is convex (or subharmonic) at $b$.
\item Assume all fibres have constant volume. Then $f$ convex implies $F$ convex. 
\item Assume all fibres are minimal. Then $f$ subharmonic implies $F$ subharmonic.
\end{enumerate}
\end{thm}
\begin{proof}
Part 1: Choose $X\in T_bB$ and a geodesic $\alpha$ in $B$ through $b$ and with direction $X$, so that $\Hess_B(F)(X,X)=\frac{d^2}{dt^2}(F\circ\alpha)(t)_{|t=0}$. In order to perform this calculation we shall lift the geodesic to all points of the fibre $\Sigma_b$. This produces a variation $\iota_t:\Sigma\hookrightarrow M$ of $\Sigma_b$ through fibres $\Sigma_{\alpha(t)}$, where $\Sigma$ denotes the abstract fibre. The infinitesimal variation is the horizontal lift of $X$, so we may write $Z=X$. Then
\begin{align*}
\frac{d^2}{dt^2}(F\circ\alpha)(t)_{|t=0}&=\int_{\Sigma}\frac{d^2}{dt^2}(f\vol_t)_{|t=0}\\
&=\int_\Sigma\frac{d^2f}{dt^2}_{|t=0}\vol_0+2\frac{df}{dt}_{|t=0}\frac{d}{dt}\vol_{t|t=0}+f\frac{d^2}{dt^2}\vol_{t|t=0}.
\end{align*}
Let us look at the terms in the integrand. The first is $\Hess_M(f)(X,X)$. In the second term we can use equation \ref{eq:volvariation}: the divergence term vanishes because the variation is normal, the other term vanishes because $H=0$. In the third term we can use equation \ref{eq:volsecvariation}. The terms containing $H$ and the second fundamental form vanish by assumption on $\Sigma_b$ while the term $\mbox{div}_\Sigma((\nabla_XX)^T)$ vanishes because the variation is through geodesics, so $\nabla_XX=0$. We thus obtain the identity
$$\Hess_B(F)(X,X)=\int_{\Sigma_b}\Hess_M(f)(X,X)+f(-g(R(e_i,X)X,e_i)+\|(\nabla X)^\perp\|^2)\vol_0.$$
The claim concerning convexity follows.  

The claim concerning subharmonicity is similar but uses the extra fact that, as in Proposition \ref{prop:sigma_restrict}, $\tr_{|T\Sigma_b}\Hess_M(f)=\Delta_{\Sigma_b} f$ and that $\int_{\Sigma_b}\Delta_{\Sigma_b} f\vol_0=0$.
Now let $X_j$ denote an orthonormal frame at $b$. Then, as before,
\begin{align*}
\Delta_BF&=\int_{\Sigma_b}\Hess_M(f)(X_j,X_j)+f(-g(R(e_i,X_j)X_j,e_i)+\|(\nabla X_j)^\perp\|^2)\vol_0\\
&=\int_{\Sigma_b}\Delta_Mf+f(-g(R(e_i,X_j)X_j,e_i)+\|(\nabla X_j)^\perp\|^2)\vol_0\geq 0.
\end{align*}
Part 2: The assumption implies that all expressions of the form $(H,Z)$, ie the horizontal component of $H$, vanish for all $t$. In this case, rather than using equation \ref{eq:volsecvariation}, it is better to notice that 
\begin{align*}
\frac{d^2}{dt^2}\vol_{t|t=0}&=-\frac{d}{dt}((H\cdot Z)\iota_t^*\vol_t)_{|t=0}\\
&=-\frac{d}{dt}((H,Z)_{|t=0}+(H,Z)^2)\iota_0^*\vol_0=0.
\end{align*}
As above, this leads to the equality
$$\Hess_B(F)(X,X)=\int_\Sigma\Hess_M(f)(X,X)\vol_0.$$
We may conclude as before. 

Part 3: The assumption implies that the volume is constant and it allows us to apply the argument of Parts 1,2 to the above formula for $\Hess_B(F)$.
\end{proof}

\textit{Remark. }If, as in Theorem \ref{thm:potential_bis}, $f$ is convex and $\Sigma_b$ is totally geodesic (or even just minimal), then the restriction to $\Sigma_b$ is subharmonic. The max principle then implies that $f$ is constant on $\Sigma_b$.

\ 

\textit{Remark. }One might alternatively be interested in the normalized integral function. This coincides (up to constants) with our integral function whenever the volume functional is constant, as in Parts 2, 3  above (or in the case of group actions, see below). In general, however, the normalization adds extra terms to the calculations above, making convexity more difficult to achieve.

\ 

\begin{cor}
Assume $M$ has a closed calibration and $\pi:M\rightarrow B$ is a Riemannian fibration with calibrated fibres. Then the fibres are minimal so we are in the situation of Theorem \ref{thm:potential_bis}, Parts 2, 3: the fibre-wise integral of any convex/subharmonic function on $M$ is a convex/subharmonic function on $B$.
\end{cor}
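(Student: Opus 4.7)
The plan is to verify the two hypotheses appearing in Proposition \ref{p:potential_bis} (Parts 2 and 3) for the given setup, and then invoke that proposition directly. The work thus reduces to checking that the fibres are (a) minimal and (b) of constant volume.

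For (a), this is a standard consequence of the calibration formalism already recalled in the excerpt: since $d\alpha=0$ and each fibre $\Sigma_b$ is calibrated by $\alpha$, the fibre is volume-minimizing in its homology class. In particular it is a critical point of the volume functional among compactly supported normal deformations, so its mean curvature vector vanishes, i.e. $H=0$. This is enough to apply Part 3 of Proposition \ref{p:potential_bis} to a subharmonic $f$.

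For (b), I would argue using Stokes' theorem. Given two points $b_0,b_1\in B$, pick a smooth path $\gamma\subseteq B$ joining them (assuming $B$ connected; otherwise argue component by component). Since $\pi$ is a Riemannian fibration with compact fibres, $W:=\pi^{-1}(\gamma)$ is a compact oriented cobordism in $M$ from $\Sigma_{b_0}$ to $\Sigma_{b_1}$, with $\partial W=\Sigma_{b_1}-\Sigma_{b_0}$. Since $\alpha$ is closed,
\begin{equation*}
0=\int_W d\alpha=\int_{\partial W}\alpha=\int_{\Sigma_{b_1}}\alpha-\int_{\Sigma_{b_0}}\alpha.
\end{equation*}
Because each fibre is calibrated, $\alpha_{|T\Sigma_b}=\vol_{\Sigma_b}$, so the two integrals equal $\Vol(\Sigma_{b_1})$ and $\Vol(\Sigma_{b_0})$ respectively. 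Thus all fibres share the same volume, which is the hypothesis needed to apply Part 2 of Proposition \ref{p:potential_bis} to a convex $f$.

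There is really no serious obstacle here: the argument is essentially bookkeeping, combining standard Harvey-Lawson theory (closed calibration implies minimality) with a one-line cobordism computation (closed calibration plus calibrated boundary implies equal volumes). The only subtlety worth flagging is making sure the orientation conventions line up so that the signs in Stokes' formula do yield the desired equality of volumes, and that compactness of $W$ is guaranteed by compactness of the fibres together with compactness of $\gamma$.
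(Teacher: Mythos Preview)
Your proposal is correct and matches the paper's intent: the corollary is stated without a separate proof, the logic being embedded in the statement itself (calibrated fibres of a closed calibration are volume-minimizing, hence minimal, so Parts 2 and 3 of Proposition~\ref{p:potential_bis} apply). Your Stokes/cobordism argument for constant volume is a perfectly valid way to spell out the Part~2 hypothesis; the paper implicitly relies on the equivalent observation that all-fibres-minimal forces the volume functional to have vanishing first variation everywhere on $B$, hence to be locally constant.
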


The corollary applies, for example, to holomorphic fibrations of K\"ahler manifolds, special Lagrangian fibrations of Calabi-Yau manifolds, or associative (coassociative) fibrations of $G_2$ manifolds such that $d\phi=0$ ($d\psi=0$).

\paragraph{K\"ahler manifolds.}In this setting, Oh's second variation formula provides positivity via an assumption on the Ricci curvature rather than on the sectional curvature; on the other hand, it also requires an integration by parts. In the context of integral functions, the simplest way to obtain a useful conclusion is by assuming that $f$ be constant along the specific fibre. As mentioned following Theorem \ref{thm:potential_bis}, this is sometimes a consequence of other seemingly weaker assumptions.

\begin{thm}\label{thm:potential_ter}
Let $M$ be a K\"ahler manifold and $\pi:M\rightarrow B$ a Riemannian submersion. Choose a function $f:M\rightarrow \R$. For any $b\in B$, let $F(b):=\int_{\Sigma_b}f\vol$ denote the fibre-wise integral of $f$.

Assume $M$ has non-positive Ricci curvature, a fibre $\Sigma_b$ is minimal Lagrangian, and $f$ is constant and non-negative along $\Sigma_b$. 

If $f$ is convex (or subharmonic) at each point of $\Sigma_b$ then $F$ is convex (or subharmonic) at $b$.
\end{thm}
\begin{proof}
Choose a direction $X\in T_bB$. Its horizontal lift has constant norm along $\Sigma_b$ so, using equation \ref{eq:kahlervolsecvariation} as in Theorem \ref{thm:potential_bis}, we obtain that, at $b\in B$,
$$\Hess_B(F)(X,X)=\int_{\Sigma_b}\Hess_M(f)(X,X)+f((\Delta\xi,\xi)-\Ric(X,X))\vol_0.$$ 
The rest of the proof is similar to that of Theorem \ref{thm:potential_bis}.

Alternatively, the special case where $f=\pi^*F$ can be seen as a consequence of Corollary \ref{cor:volume}.
\end{proof}

This result could, for example, be applied in the following context.

\ 

\textit{Example. }Assume that $\pi:M\rightarrow B$ is as above and that $f:M\rightarrow\C$ is holomorphic. Then $\log|f|$ is PSH, so $|f|$ is non-negative and PSH. Under appropriate assumptions, Proposition \ref{p:kahler} and Theorem \ref{thm:potential_ter} lead to convexity properties for the fibre-wise $L^1$-norm of $f$.

\paragraph{$G_2$ manifolds.} The $G_2$ case is similar.

\begin{thm}\label{thm:G2_bis}
Let $M$ be a manifold with a closed $G_2$ structure and $\pi:M\rightarrow B$ a Riemannian submersion with coassociative fibres.  Choose a function $f:M\rightarrow \R$. For any $b\in B$, let $F(b):=\int_{\Sigma_b}f\vol$ denote the fibre-wise integral of $f$.

Assume $M$ satisfies the positivity condition, a fibre $\Sigma_b$ is totally geodesic, and $f$ is constant and non-negative along $\Sigma_b$.

If $f$ is convex (or subharmonic) at each point of $\Sigma_b$ then $F$ is convex (or subharmonic) at $b$.
\end{thm}
\begin{proof}
Using equation \ref{eq:G2volsecvariation} as in Theorem \ref{thm:potential_bis}, we obtain that, at $b\in B$,
\begin{align*}
\Hess_B(F)(X,X)&=\int_{\Sigma_b}\Hess_M(f)(X,X)\vol_0+f\int_{\Sigma_b}d(\iota_Z\tau_2\wedge\iota_Z\phi)+\iota_Zd\tau_2\wedge\iota_Z\phi\\
&= \int_{\Sigma_b}\Hess_M(f)(X,X)\vol_0+f\int_{\Sigma_b}\iota_Zd\tau_2\wedge\iota_Z\phi.
\end{align*}
The rest of the proof is similar to that of Theorem \ref{thm:potential_bis}.
\end{proof}

\paragraph{Group actions.}Assume a compact group $G$ acts freely by isometries on $(M,g)$. The corresponding map $\pi:M\rightarrow B$ onto the orbit space is then automatically a Riemannian submersion. Choose a function $f:M\rightarrow\R$. Then:
\begin{itemize} 
\item $f$ is $\pi$-invariant iff it is $G$-invariant. 
\item Rather than using the induced Riemannian volume form to define the fibre-wise integral function $F$ on $B$, in this context it is natural to use the Haar measure $d\mu$. We thus set 
$$F(b):=\int_Gf_{|\mathcal{O}_b} d\mu,$$ 
where $\mathcal{O}_b$ denotes the $G$-orbit above $b\in B$. 
\end{itemize}
Notice two features of this integration process: (i) applied to an invariant function $f=\pi^*F$, integration reproduces that same function $F$, (ii) derivatives of the integral function do not require second variation formulae because the Haar measure does not depend on the specific point $b$. This eliminates the need for some of the above assumptions. 

\begin{thm}\label{thm:kahlerG}
Let $M$ be a Riemannian manifold endowed with a free isometric action of a compact Lie group $G$. Let $\pi:M\rightarrow B$ be the corresponding Riemannian submersion. 

Choose a function $f:M\rightarrow\R$. Let $F$ denote either the fibre-wise integral function, defined using the Haar measure, or the fibre-wise supremum function.
\begin{enumerate}
\item If $f$ is convex then $\pi^*F$ is convex and invariant.
\item Assume $M$ is K\"ahler and the action also preserves $J$. If $f$ is PSH then $\pi^*F$ is PSH and invariant.
\item Assume $M$ has a closed $G_2$ structure and the action preserves $\phi$. If $f$ is PSH then $\pi^*F$ is PSH and invariant.
\end{enumerate}
\end{thm}

The proof is similar to that of Theorem \ref{thm:pluripotential}, using also the results of \cite{HL3}.

We can now apply Propositions \ref{p:potential}, \ref{p:kahler}, \ref{p:G2} to the invariant functions $\pi^*F$ on $M$ so as to prove potential-theoretic properties of the functions $F$ on $B$.

\section{Interlude: Geometric fibrations}\label{s:fibrations}

We have seen above the parallel roles of Lagrangian and coassociative submanifolds. The existence of fibrations with such fibres has strong consequences. This topic acquired special prominence starting with \cite{SYZ}, \cite{GYZ}, which related calibrated fibrations to Mirror Symmetry. Let us review the two cases in turn.

\paragraph{Lagrangian fibrations.}Let $M$ be a symplectic manifold. Assume $\pi:M\rightarrow B$ is a fibration with smooth Lagrangian fibres. Choose local coordinates on $B$. Each component of the projection map is constant on the fibres and defines a Hamiltonian vector field which, by the Lagrangian assumption, is tangent to the fibres. The $n$ components define Poisson-commuting Hamiltonian functions, thus a completely integrable Hamiltonian system. The flows of their vector fields define a local $\R^n$-action. If the fibres are compact and we choose the local coordinates appropriately we may assume all flows are $2\pi$-periodic so the action descends to a local action of the standard torus. This action is free so each fibre is a torus. Such coordinates, thus the torus action, are unique up to the action of $\GL(n,\Z)$ (and translations): this corresponds to linear combinations of the component functions, thus of the Hamiltonian vector fields. One thus obtains an integral affine structure on $B$. We refer to \cite{Evans} for details.

\ 

\textit{Remark. }Basically, the above shows that $(M,\pi)$ is locally symplectomorphic to the standard $(\T^n\times\R^n,\pi)$. If $M$ is K\"ahler, this identification does not necessarily bring $J$ or $g$ to standard form: there are obvious obstructions in terms of curvature and of integrability of the horizontal distribution. In general we should thus not expect the local torus action to be isometric nor holomorphic.

\paragraph{Coassociative fibrations.}Let $M$ be a manifold with a closed $G_2$ structure. Assume $\pi:M\rightarrow B$ is a fibration with smooth coassociative fibres: see eg \cite{Baraglia},\cite{Donaldson} for general theory, \cite{PaciniRaffero} for a summary of examples with compact fibres due to Bryant, Fernandez and Lauret, \cite{PaciniRaffero} and \cite{JasonSpiro} for examples with non-compact fibres. Here, we shall assume the fibres are compact.

The projection provides an isomorphism between each $(T_p\Sigma_b)^\perp$ and $T_bB$ so the normal bundle, thus $\Lambda^2_+(\Sigma_b)$, is parallelizable. Choose a basis $Z_1,Z_2,Z_3$ for $T_bB$. We will use the same notation to denote the corresponding normal vector fields along $\Sigma_b$. The forms $Z_i\lrcorner\phi$ provide a basis for the self-dual forms at each point. Since each $Z_i$ corresponds to an infinitesimal deformation through coassociative fibres, deformation theory implies that each $Z_i\lrcorner\phi$ is also harmonic. We may think of $B$ as a submanifold in the moduli space of coassociative deformations. They locally coincide iff $b^2_+(\Sigma)=\dim(B)=3$.

\ 

If $\pi$ defines a Riemannian submersion, \cite{Baraglia} shows that the fibres, with the induced metric, are hyper-K\"ahler thus Ricci-flat (the argument requires only $d\phi=0$). The fibres must then be either tori or K3 surfaces. If furthermore the submersion is generated by the free action of a group $G$ which preserves $\phi$, then $G$ induces hyper-K\"ahler automorphisms of the fibres. Since K3 surfaces only have finite automorphism groups, the fibres must be tori.

\ 

\textit{Remark. }Assume that $d\phi=0$ and that $\pi$ is a Riemannian submersion with coassociative fibres. One can then show \cite{PaciniRaffero} that the two conditions $T=0$ and $A=0$ imply $d\psi=0$. It follows that the fibres are minimizing, thus have constant volume. The simplest example is the standard $T^4$-fibration of the torus $T^7=\R^7/\Z^7$. This example has trivial holonomy.

\cite{Baraglia} proves that, for topological reasons, if $M$ is compact and has holonomy $G_2$ (in particular, both its $G_2$ calibrations are closed, ie $d\phi=0$ and $d\psi=0$) then it does not admit global smooth coassociative fibrations.

\section{Application in the K\"ahler setting}\label{s:application}

In previous sections we showed how pluri-potential theory can be used, in the context of immersions, to control various classes of submanifolds. We now want to provide a geometric application in the dual context of Riemannian submersions. We shall use two ingredients which we have already introduced in the previous sections.

\ 

1. Let $M$ be a K\"ahler manifold. Let $K_M:=\Lambda^{n,0}(M)$ denote the holomorphic line bundle of forms of type $(n,0)$, endowed with the induced Hermitian metric $h$. The curvature of $K_M^{-1}$ is the Ricci 2-form $\rho$ of $M$ characterized by the property:
$$\rho(X,Y)=\Ric(JX,Y).$$
It is a closed 2-form of type $(1,1)$. According to general theory, any nowhere-vanishing local holomorphic section $\sigma$ of $K_M^{-1}$ provides a function $H:=h(\sigma,\sigma)$ such that
$$\rho=i\partial\bar\partial (-\log H).$$
The key point for us is the following: $\log H$ is PSH iff $\Ric\leq 0$. 

\ 

\textit{Remark.} Recall that we have already discussed (local) geometric constructions of PSH functions. In the K\"ahler context this is yet another such construction: this time, via a curvature condition on $M$.

\ 

2. Assume $M$ is compact and $G:=T^n$ acts isometrically on $M$. General theory implies that the action also preserves $J$. We shall assume the following: 

(i) The action is free on an open subset of $M$. Let $\mathcal{U}\subseteq M$ denote the maximal such subset. 

(ii) The orbits in $\mathcal{U}$ are Lagrangian. 

\ 

\textit{Example. }The conditions are satisfied for any toric K\"ahler manifold, when the action is Hamiltonian. This is a very large class of examples. 

However, they may be satisfied also by non-Hamiltonian actions, eg the $\Sph^1$-action by rotations on the standard 2-torus. In particular, the Lagrangian condition is always satisfied in dimension 2, ie when $M$ is a Riemann surface.

\ 

Let $\pi:\mathcal{U}\rightarrow B$ denote the corresponding Riemannian submersion, where the fibres are the group orbits. Choose any basis $\{v_j\}$ of the Lie algebra $\mathfrak{g}$ such that $v_1^*\wedge\dots\wedge v_n^*$ coincides with the Haar measure $d\mu$. Let $\tilde{v}_j$ denote the corresponding fundamental vector fields on $\mathcal{U}$. Set $g_{jk}:=g(\tilde{v}_j,\tilde{v}_k)$. The group action implies that these functions are $\pi$-invariant so, along any orbit $\mathcal{O}$, 
$$\Vol(\mathcal{O})=\int_G\sqrt{\det g_{jk}}\,d\mu=\sqrt{\det g_{jk}}.$$ 

The complexified vector fields $\tilde{v}_j-iJ\tilde{v}_j$ are holomorphic. It follows that $\sigma:=(\tilde{v}_1-iJ\tilde{v}_1)\wedge\dots\wedge(\tilde{v}_n-iJ\tilde{v}_n)$ is a nowhere-vanishing holomorphic section of $K_M^{-1}$ over $\mathcal{U}$. Set 
$H:=h(\sigma,\sigma)=\det h_{j\bar{k}}$, where $h_{j\bar{k}}:=g^\C(\tilde{v}_j-iJ\tilde{v}_j,\tilde{v}_k+iJ\tilde{v}_k)$.

The key point now is that the Lagrangian condition implies that $h_{j\bar{k}}=2g_{jk}$, so (up to a constant) the volume of the orbits coincides with $\sqrt{H}$.

\ 

The bottom line is that $H$ has a dual role: on the one hand it provides a potential for the Ricci curvature, on the other it provides the means for calculating the volume of the orbits of the group action. 

This leads to the following result, which (using stronger hypotheses) strongly improves Corollary \ref{cor:volume}: it provides a complete dictionary between potential-theoretic properties of $\Vol$ and the sign of the ambient curvature.

\begin{thm}\label{thm:kahlersubs}
Let $M$ be a compact K\"ahler manifold endowed with an isometric $\T^n$-action. Assume that, on a maximal open subset $\mathcal{U}$, the action is free with Lagrangian orbits. Let $\pi:\mathcal{U}\rightarrow B$ denote the corresponding Riemannian submersion and $\Vol$ the volume functional on $B$, ie the volume of the orbits. Then there exists a minimal orbit $\mathcal{O}_b$ which maximizes $\Vol$ in $B$.

Furthermore, choose any minimal orbit $\mathcal{O}_b$. Then:
\begin{enumerate}
\item For any $X\in T_bB$, the Ricci curvature of $M$ satisfies
$$\Ric(X,X)=\Hess_B(-\log\Vol)(X,X),$$
so $\Ric\leq 0$ $(\Ric\geq 0)$ along $\mathcal{O}_b$ iff $\log\Vol$ $(\log(1/\Vol))$ is convex at $b$.
\item Along $\mathcal{O}_b$, the scalar curvature of $M$ satisfies
$$s=-2(\Delta_B(\Vol)/\Vol)_{|b},$$
so $\pm s\leq 0$ iff $\pm\Vol$ is subharmonic at $b$. 
\item If all fibres are minimal then $\mathcal{U}$ is Ricci-flat.
\end{enumerate}
\end{thm}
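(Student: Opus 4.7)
The three parts of the theorem share a single engine: via the Lagrangian hypothesis, $\Vol \propto \sqrt{H}$ on $B$, while separately $\rho = i\partial\bar\partial(-\log H)$ makes $-\log H$ an ambient potential for the Ricci form on $\mathcal{U}$. My plan is therefore to transfer $\partial\bar\partial(-\log H)$ upstairs to $\Hess_B(-\log\Vol)$ downstairs using Proposition~\ref{p:kahler}(1), which applies to an invariant function along a Lagrangian fibre sitting in its critical locus. The key observation is that minimality of $\mathcal{O}_b$ is exactly what supplies this critical-locus hypothesis, via the first variation of volume.

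For the existence claim, as $b \to \partial\mathcal{U}$ the action acquires a positive-dimensional stabilizer, so some $\tilde v_j$ vanishes and $\Vol = \sqrt{\det g_{jk}} \to 0$; since $M$ is compact, $\Vol$ extends continuously by zero to $\partial\mathcal{U}$ and attains a positive maximum on the interior $B$. The first-variation formula along the horizontal lift of $X \in T_bB$, combined with $G$-invariance of $H$ and the lift (both are horizontal $G$-invariant vector fields along the orbit, so their inner product is constant), simplifies to $d\Vol(X_b) = -\Vol(\mathcal{O}_b)\,g(H, X_p)$ for any $p \in \mathcal{O}_b$. Hence $d\Vol = 0$ at $b$ iff $\mathcal{O}_b$ is minimal, giving both the maximizer's minimality and the converse used in Part 1: any minimal orbit is critical for $\Vol$, hence for $\hat H := \Vol^2/c^2$, so the invariant function $f := -\log H = \pi^*(-\log\hat H)$ has $df \equiv 0$ along $\mathcal{O}_b$ (vertical derivatives vanish by invariance, horizontal ones by criticality). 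Proposition~\ref{p:kahler}(1) then yields $2i\partial\bar\partial f(X,JX) = \Hess_B(-\log\hat H)(X,X)$ along $\mathcal{O}_b$, and combining with $i\partial\bar\partial f = \rho$, the definitional identity $\Ric(X,X) = \rho(X,JX)$, and $\log\hat H = 2\log\Vol + \mathrm{const}$ delivers Part~1.

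Part~2 is a trace: the Lagrangian condition makes $\{e_i, Je_i\}_{i=1}^n$ an orthonormal basis of $T_pM$ whenever $\{e_i\}$ is one of $T_bB$, and the K\"ahler identity $\Ric(JX,JY)=\Ric(X,Y)$ gives $s = 2\sum_i \Ric(e_i,e_i) = 2\Delta_B(-\log\Vol)(b)$; at a critical point of $\Vol$ the $|\nabla\Vol|^2/\Vol^2$ piece of $\Delta_B\log\Vol$ drops, leaving $-2\Delta_B\Vol/\Vol$. For Part~3, minimality of every orbit forces every $b \in B$ to be critical for $\Vol$, so $\Vol$, and therefore $H$, is locally constant on $\mathcal{U}$; but then $\rho = i\partial\bar\partial(-\log H) \equiv 0$, and since the K\"ahler Ricci tensor is recovered from $\rho$ through $\Ric(X,Y) = \rho(X,JY)$, we conclude $\Ric \equiv 0$.

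The main subtle point throughout is the translation between \emph{geometric} criticality (minimal orbit) and \emph{analytic} criticality (Lagrangian fibre in the critical locus of the ambient potential $-\log H$); once this dictionary, mediated by the first-variation formula and the $G$-invariance arguments, is in hand, each part is essentially a substitution into machinery already assembled in Sections~\ref{s:invariant}--\ref{s:secondvar}.
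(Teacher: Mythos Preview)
Your argument is correct and follows essentially the same route as the paper: identify $\Vol\propto\sqrt{H}$, use $\rho=i\partial\bar\partial(-\log H)$, observe that minimality of $\mathcal{O}_b$ places it in the critical locus of the invariant function $-\log H$, and invoke Proposition~\ref{p:kahler}(1). Your Part~2 traces Part~1 directly rather than passing through $\Delta_M(-\log H)$ and Proposition~\ref{p:PSHconvexity} as the paper does, but this is a cosmetic difference; one small imprecision is your existence argument's assumption that stabilizers on $\partial\mathcal{U}$ are positive-dimensional (they could a priori be finite), though the paper itself dispatches existence in a single clause ``by compactness of $M$''.
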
 
\begin{proof}The maximizing orbit exists by compactness of $M$ (a minimizing orbit might not, because the orbits may collapse outside $\mathcal{U}$). The general theory of orbits \cite{Palais} implies that it is a minimal submanifold.

Part 1: Minimality implies that $\Vol$, thus $H$ and $\log H$, are critical along $\mathcal{O}_b$. Then 
\begin{align*}\Ric(X,X)&=\rho(X,JX)=i\partial\bar\partial(-\log H)(X,JX)\\
&=(1/2)\Hess_B(-\log H)(X,X)\\
&=\Hess_B(-\log\Vol)(X,X),
\end{align*}
where on the second line we use Proposition \ref{p:kahler}. The conclusion follows from the fact that $\Ric(JX,JX)=\Ric(X,X)$.

An alternative proof uses the fact that the induced metric on each orbit is bi-invariant because $\T^n$ is Abelian. In this situation it is known that the fundamental vector fields are parallel and that the induced curvature is zero. Each orbit is thus a flat torus. It follows that, at a minimal orbit, Oh's second variation formula simplifies, becoming 
$$\frac{d^2}{dt^2}\Vol_{|t=0}=\int_{\mathcal{O}_b}-\Ric(X,X)\vol=-\Ric(X,X)\Vol,$$ 
leading to the same result.

Part 2: Choose an ON basis $\{e_j\}$ of $T_p\mathcal{O}_b$. The scalar curvature $s$ can be written
\begin{align*}
s&=\Ric(e_j,e_j)+\Ric(Je_j,Je_j)=2\Ric(e_j,e_j)\\
&=2\rho(e_j,Je_j)=2i\partial\bar\partial(-\log H)(e_j,Je_j)\\
&=\Delta_M(-\log H),
\end{align*}
where in the last line we use the formula in the remark following Proposition \ref{p:kahler}.
Minimality implies that, along that fibre, the vertical contribution to $\Delta_M$ vanishes. We thus find, along that fibre,
\begin{equation*}
s=2\Delta_B(-\log \sqrt H)=2\Delta_B(-\log \Vol)=-2\Delta_B(\Vol)/\Vol,
\end{equation*}
where we also use the fact that $\nabla\Vol=0$.

Part 3: The assumption implies that the volume functional is constant. In turn, this implies that the Ricci potential is constant so $\Ric=0$.
\end{proof}

\ 

\textit{Remark. }We restrict to torus actions simply to take into account the fact (see previous section) that Lagrangian fibrations have torus fibres.

\ 

\textit{Remark. }In general we should not expect $\mathcal{U}=M$. Indeed, this would imply for example that the Ricci 2-form has a global potential. For example, the standard $\Sph^1$-action on $M:=\CP^1$ is not free at the two poles. Correspondingly, the base space becomes singular there.

\

We can use this result to help locate volume maximizing/minimizing orbits.

\begin{cor}\label{cor:kahlersubs}
In the above context:
\begin{enumerate}
\item $\Ric<0$ $(\Ric>0)$ implies that any critical point $b$, ie minimal orbit $\mathcal{O}_b$, is a strict local minimum (maximum) point for $\Vol$.
\item For any local minimum (maximum) point $b\in B$, $\Ric\leq 0$ thus $s\leq 0$ $(\Ric\geq 0, s\geq 0)$ along $\mathcal{O}_b$.
\end{enumerate}
\end{cor}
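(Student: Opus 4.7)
The plan is to deduce both parts as direct consequences of Theorem \ref{thm:kahlersubs}(1), combined with the standard characterization of strict local extrema via sign-definiteness of the Hessian at a critical point.

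First I would record the preliminary observation that at a critical point $b$ of $\Vol$ (equivalently, at a minimal orbit $\mathcal{O}_b$) we have $d(\log\Vol)_{|b}=0$ and $d(-\log\Vol)_{|b}=0$, since $\Vol>0$. At such a point the three Hessians are related by positive rescaling:
$$\Hess_B(\log\Vol)_{|b}=\Vol(b)^{-1}\Hess_B(\Vol)_{|b},\qquad \Hess_B(-\log\Vol)_{|b}=-\Vol(b)^{-1}\Hess_B(\Vol)_{|b}.$$
Hence sign-definiteness of any one determines that of the others.

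For Part 1, suppose $\Ric<0$ along $\mathcal{O}_b$. Theorem \ref{thm:kahlersubs}(1) gives $\Hess_B(-\log\Vol)_{|b}(X,X)=\Ric(X,X)<0$ for every nonzero $X\in T_bB$, so $\Hess_B(\Vol)_{|b}$ is positive definite at the critical point $b$, making $b$ a strict local minimum of $\Vol$. The $\Ric>0$ case is obtained by reversing signs throughout. For Part 2, if $b$ is a local minimum of $\Vol$, then $b$ is critical (so $\mathcal{O}_b$ is minimal) and $\Hess_B(\Vol)_{|b}\geq 0$, which yields $\Hess_B(-\log\Vol)_{|b}\leq 0$. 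Applying Theorem \ref{thm:kahlersubs}(1) gives $\Ric(X,X)\leq 0$ for every $X\in T_bB$, and the iff formulation of that theorem extends this to $\Ric\leq 0$ along the entire orbit. Taking the trace then yields $s\leq 0$. The local maximum case is symmetric.

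The main obstacle is essentially bookkeeping; there is no new substantive difficulty beyond what is already in Theorem \ref{thm:kahlersubs}(1). The one delicate point, namely passing from the horizontal identity $\Ric(X,X)=\Hess_B(-\log\Vol)(X,X)$ to sign control of $\Ric$ on all of $T_pM$, is already packaged into the iff statement of Theorem \ref{thm:kahlersubs}(1) via the $J$-invariance of $\Ric$ and the Lagrangian splitting $T_pM=H\oplus JH$ at each $p\in\mathcal{O}_b$.
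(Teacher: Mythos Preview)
Your proposal is correct and matches the paper's intent: the paper states this as a corollary of Theorem \ref{thm:kahlersubs} with no separate proof, and your argument is precisely the direct deduction from Part 1 of that theorem (with the scalar curvature claim following either by tracing the Ricci inequality or, equivalently, from Part 2). The bookkeeping you spell out---relating $\Hess_B(\Vol)$, $\Hess_B(\log\Vol)$ and $\Hess_B(-\log\Vol)$ at a critical point, and invoking the Lagrangian splitting/$J$-invariance already built into the iff statement of Theorem \ref{thm:kahlersubs}(1)---is exactly what the paper leaves implicit.
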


\ 

\textit{Example. }We can apply these results to any surface in $\R^3$ obtained by rotating a curve $\gamma$ in the $(y,z)$-plane around the $z$-axis. It is a standard exercise, in this case, to show that the rotationally symmetric geodesics correspond to points of $\gamma$ whose tangent line is parallel to the $z$-axis. This leads to the same conclusion regarding curvatures.

\paragraph{Comparisons with previous literature.}Statements very similar to Theorem \ref{thm:kahlersubs} already appear in the literature, in particular in work by Abreu \cite{Abreu} and Oliveira, Sena-Dias \cite{GoncaloRosa}. The main novelty in our presentation is that we contextualize these results in terms of the larger framework of potential theory and Riemannian submersions.

Recall that K\"ahler geometry rests upon very close interactions between symplectic, complex and Riemannian geometry. \cite{Abreu} and \cite{GoncaloRosa} emphasize the symplectic aspects and corresponding coordinate systems. A second point of view, emphasizing complex geometry via pluri-potential theory, appeared in \cite{Pacini}. Theorem \ref{thm:kahlersubs} completes the picture by presenting the Riemannian perspective.

Compared to \cite{Abreu} and \cite{GoncaloRosa}, we have removed the assumption that the action be Hamiltonian. The previous section on Lagrangian fibrations implies however that our action will in any case be locally Hamiltonian, so the main difference lies in our general framework and coordinate-free approach. 

Compared to \cite{Pacini}, notice that both \cite{GoncaloRosa} and Theorem \ref{thm:kahlersubs} discuss convexity only at minimal fibres. The complex viewpoint adopted in \cite{Pacini}, using free $G^c$-actions, leads instead to a global convexity result in terms of the natural structure on $G^c/G$. If $M$ is compact then $\mbox{Aut}(M)$ is a complex Lie group so, by the universal property of complexified Lie groups, any $G$-action, seen as a homomorphism $G\rightarrow\mbox{Aut}(M)$, extends to $G^c$-action on $M$. In this sense, the setting of Theorem \ref{thm:kahlersubs} and \cite{Pacini} are locally similar. Notice however that the complexified action might not be free, eg when we complexify the $\Sph^1$-action on a standard 2-torus. The Riemannian viewpoint adopted here offers, on the other hand, the possibility of defining subharmonicity, which one can then relate to scalar curvature as above.

\bibliographystyle{amsplain}
\bibliography{subPSH_biblio}

\end{document}